\DeclareMathAlphabet{\mathpzc}{OT1}{pzc}{m}{it}
\theoremstyle{plain}
\newtheorem*{uprop}{Proposition}
\newtheorem*{ulem}{Lemma}
\newtheorem*{uthm}{Theorem}
\newtheorem*{ucor}{Corollary}
\theoremstyle{definition}
\newtheorem*{udefi}{Definition}
\newtheorem*{uexas}{Examples}
\theoremstyle{defC}
\newtheorem*{udefiC}{Definition}
\theoremstyle{thmC}
\newtheorem*{uthmC}{Theorem}
\newtheorem*{upropC}{Proposition}
\newtheorem*{ulemC}{Lemma}
\def\mathrmdef#1{\expandafter\def\csname#1\endcsname{{\rm#1}}}
\def\mathsfdef#1{\expandafter\def\csname#1\endcsname{{\rm\sf#1}}}
\def\V{V}
\def\TV{(\mT,\V)}
\def\VCat{V\mbox{-}\Cat}
\def\TVCat{(\mT,V)\mbox{-}\Cat}
\def\TVMod{(\mT,V)\mbox{-}\Mod}
\def\VRel{V\mbox{-}\Rel}
\def\two{\mathsf{2}}
\def\fx{\mathfrak{x}}
\def\fy{\mathfrak{y}}
\def\fw{\mathfrak{w}}
\def\cL{\mathcal{L}}
\def\cR{\mathcal{R}}
\def\fX{\mathfrak{X}}
\def\fY{\mathfrak{Y}}
\def\mL{\mathbb{L}}
\def\mP{\mathbb{P}}
\def\mR{\mathbb{R}}
\def\mS{\mathbb{S}}
\def\mT{\mathbb{T}}
\def\mU{\mathbb{U}}
\def\mId{\mathbb{I}\mathrm{d}}
\def\relto{{\longrightarrow\hspace*{-2.8ex}{\mapstochar}\hspace*{2.6ex}}}
\newcommand{\modto}{{\longrightarrow\hspace*{-2.8ex}{\circ}\hspace*{1.2ex}}}
\def\rel{\relto }
\newcommand{\multimapdot}{\mathrel{-\!\bullet}}
\newcommand{\multimapdotinv}{\mathrel{\bullet\!-}}
\newcommand{\homcompleft}{\multimapdot}
\newcommand{\homcompright}{\multimapdotinv}
\DeclareMathOperator{\yoneda}{\mathpzc{y}}
\DeclareMathOperator{\yonmult}{\mathpzc{m}}
\newcommand{\lari}{\textsc{lari}}
\begin{document}  
\title[Lax orthogonal factorisations in monad-quantale-enriched categories]{Lax orthogonal factorisations in
monad-quantale-enriched categories}  

\author{Maria Manuel Clementino}
\address{CMUC, Department of Mathematics\\ University of Coimbra\\ 3001-501 Coimbra\\ Portugal}
\email{mmc@mat.uc.pt}

\author{Ignacio L\'{o}pez-Franco}
\address{Department of Mathematics and Applications\\ CURE --
  Universidad de la Rep\'ublica\\
  Tacuaremb\'o s/n\\ Maldonado\\ Uruguay}
\email{ilopez@cure.edu.uy}
\dedicatory{Dedicated to Ji\v r\'{\i} Ad\'{a}mek}
\thanks{The authors
acknowledge partial financial  
assistance by Centro de Matem\'{a}tica da Universidade de
  Coimbra -- UID/MAT/00324/2013, funded by the Portuguese Government
  through FCT/MEC and co-funded by the European Regional Development Fund
  through the Partnership Agreement PT2020.}

\subjclass[2010]{18A32, 18C15, 18C20, 06B35, 54B30}
\keywords{Quantale, monad, enriched category, $(\mT,\V)$-category, presheaf monad, injective morphism}

\begin{abstract}
We show that, for a quantale $\V$ and a $\Set$-monad $\mathbb{T}$ laxly extended to $\V$-$\Rel$, the presheaf monad on
the category of $(\mT,\V)$-categories is simple, giving rise to a lax orthogonal factorisation system (\textsc{lofs})
whose corresponding weak factorisation system has embeddings as left part. In addition, we present presheaf submonads
and study the \textsc{lofs}s they define. This provides a method of constructing weak factorisation systems on some well-known examples of topological categories over $\Set$.

\end{abstract}
\maketitle

\section{Introduction}
In 1985 Cassidy-H\'{e}bert-Kelly \cite{CHK} studied orthogonal factorisations systems induced by reflective subcategories,
with particular emphasis in the case when the reflection is simple. Among the lax orthogonal factorisation systems (\textsc{lofs}s), that
generalise the orthogonal ones in 2-categories, those arising from simple monads -- as defined by the authors of this
paper in \cite{CLF16, CLF17} -- have particular relevance. This paper intends to give a systematic way of producing
simple monads in (some) topological categories over \Set\ using the presheaf monads of $(\mT,\V)$-Cat studied in
\cite{H11, CH08}. Given a quantale $\V$ and a well-behaved \Set-monad $\mT$, the category $\TV$-\Cat, of generalised
$\V$-enriched categories and their functors, is topological and locally preordered (see \cite{CH03, CT03}). As crucial
examples we mention the categories \Ord\ of (pre)ordered sets and monotone maps, \Top\ of topological spaces and
continuous maps, \Met\ of Lawvere generalised metric spaces and non-expansive maps \cite{Law73}, and \App\ of Lowen
approach spaces and non-expansive maps \cite{Lo}.
Equipping the quantale $\V$ with a canonical $\TV$-category structure, one gets naturally a Yoneda Lemma and a
well-behaved presheaf monad that was shown to be lax idempotent in \cite{H11}. Here we show that it is simple, inducing a
lax orthogonal factorisation system which underlies a weak factorisation system having embeddings as left part. (In order
to avoid technicalities we restrict ourselves to separated, or skeletal, $\TV$-categories, so that their hom-sets have an
anti-symmetric order.) This encompasses the weak factorisation system in \Ord\ studied by
Ad\'{a}mek-Herrlich-Rosick\'y-Tholen in \cite{AHRT}.

These presheaf monads have interesting simple submonads, namely the ones which have as algebras the Lawvere complete
$\TV$-categories (see \cite{CH09}), and that gives -- as shown by Lawvere in \cite{Law73} -- Cauchy-complete spaces when
one takes $\mT=\mId$ the identity monad and $\V$ the complete half-real line. These also cover, following techniques
developed in \cite{CH08}, the weak factorisation systems of $\Top_0$ studied in \cite{CCM}, having as left parts
embeddings, dense embeddings, flat embeddings and completely flat embeddings.

The examples of \textsc{lofs}s and weak factorisation systems we consider are the result of a general construction distinct from, and in many ways orthogonal to, the more usual method of cofibrant generation. The construction of cofibrantly generated weak factorisation systems is usually known as Quillen's small object argument \cite{Qui}. A version for algebraic factorisation systems was introduced in \cite{Ga}. The construction we employ, introduced in \cite{CLF16}, gives rise to weak factorisations systems that need not be cofibrantly generated (see \cite{LF}).

This paper does not intend to be self-contained. In Section 2 and 3 we present the basic definitions and results on lax
orthogonal factorisation systems and on $\TV$-categories that are needed for this work. For a better understanding of
these topics we refer to the papers mentioned there and to the monograph \cite{HST}. In Section 4 we study the presheaf
monads on $\TV$-categories and their simplicity. In Section 5 we explore the examples of lax orthogonal factorisation
systems induced by these presheaf monads.

\section{Lax orthogonal factorisation systems}
Throughout we will be working in a category \C\ enriched in posets, or \emph{$\Ord$-enriched category}, so that each
hom-set $\C(X,Y)$ is equipped with an order structure $\leq$ that is preserved by composition: if $f,f':X\to Y$, with
$f\leq f'$, $g:Y\to Z$ and $h:W\to X$, then $g\cdot f\leq g\cdot f'$ and $f\cdot h\leq f'\cdot h$.

\subsection{Weak factorisation systems}
Given morphisms $f,g$, we say that \emph{$f$ has the left lifting property with respect to
$g$}, and that \emph{$g$ has the right lifting property with respect to $f$}, if
every commutative square as shown has a (not
necessarily unique) diagonal filler.
\[\xymatrix{.\ar[r]\ar[d]_f&.\ar[d]^g\\
.\ar[r]\ar@{.>}[ru]&.}\]
A \emph{weak factorisation system} (\textsc{wfs}) in a category is a pair $(\cL,\cR)$ of
families of morphisms such that:
\begin{itemize}
\item $\mathcal{L}$ consists of those morphisms with the left lifting property
  with respect to each morphism of $\mathcal{R}$.
\item $\mathcal{R}$ consists of those morphisms with the right lifting property
  with respect to each morphism of $\mathcal{L}$.
\item Each morphism in the category factors through an element
  of $\mathcal{L}$ followed by one of $\mathcal{R}$.
\end{itemize}

\subsection{Algebraic weak factorisation systems}
 An \Ord-functorial factorisation on an \Ord-category \C\ consists of a factorisation
 $\dom\stackrel{\lambda}{\Rightarrow}E\stackrel{\rho}{\Rightarrow}\cod$
  of the natural
  transformation $\dom\Rightarrow\cod$ with component at $f\in \C^\two$ equal to
  $f\colon\dom(f)\to\cod(f)$, in the category of locally monotone functors $\C^\two\to \C$.
  As in the case of functorial factorisations on ordinary categories, an
  \Ord-functorial factorisation
  can be equivalently described as:
  \begin{itemize}
  \item A copointed endo-\Ord-functor $\Phi\colon L\Rightarrow 1_{\C^\two}$ on
    $\C^\two$ with $\dom(\Phi)=1$.
  \item A pointed endo-\Ord-functor $\Lambda\colon 1_{\C^\two}\Rightarrow R$ on
    $\C^\two$ with $\cod(\Lambda)=1$.
  \end{itemize}
  The three descriptions of an \Ord-functorial factorisation are related by:
  \[    \dom(\Lambda_f)=Lf=\lambda_f\qquad \cod(\Phi_f)=Rf=\rho_f.
  \]

  An \emph{algebraic weak factorisation system}, abbreviated {\textsc{awfs}}, on an
  \Ord-category \C\ consists of a pair $(\mL,\mR)$, where
  $\mL=(L,\Phi,\Sigma)$ is an \Ord-comonad and
  $\mR=(R,\Lambda,\Pi)$ is an \Ord-monad on $\C^\two$, such that $(L,\Phi)$ and
  $(R,\Lambda)$ represent the same \Ord-functorial factorisation on \C\ (i.e., the
  equalities above hold), fulfilling a distributivity condition which we explain next.

 Note that the components of $\Sigma$ and $\Pi$ are as follows
 \[
   \Sigma_f=
   \diagram
   \cdot{}\ar@{=}[r]\ar[d]_{Lf}&
   \cdot{}\ar[d]^{L^2f}\\
   \cdot{} \ar[r]^{\sigma_f}
   & \cdot{}
   \enddiagram
   \quad\text{ and }\quad
   \Pi_f=
   \diagram
   \cdot{}\ar[r]^{\pi_f}\ar[d]_{R^2f}&\cdot{}\ar[d]^{Rf}\\
     \cdot{}\ar@{=}[r]&\cdot{}
     \enddiagram
 \]
  which together form a natural transformation $\Delta:LR\Rightarrow RL$, with $\Delta_f=(\sigma_f,\pi_f)$ as below.
  \[\Delta_f=
    \diagram
    Ef\ar[r]^{\sigma_f}\ar[d]_{LRf}\ar@{.>}[rd]^1&ELf\ar[d]^{RLf}\\
    ERf\ar[r]_{\pi_f}&Ef
    \enddiagram
  \]
  The distributivity axiom requires $\Delta$ to be a mixed distributive law between
  the comonad $\mL$ and the monad $\mR$, that reduces to the
  commutativity of the following diagrams.
  \begin{equation}
    \label{distrib}
    \begin{split}
      \xymatrix{LR^2\ar[r]^{\Delta R}\ar[d]_{L\Pi}&RLR\ar[r]^{R\Delta}&R^2L\ar[d]^{\Pi L}&LR\ar[rr]^{\Delta}\ar[d]_{\Sigma
          R}&&RL\ar[d]^{R\Sigma}\\
        LR\ar[rr]^{\Delta}&&RL&L^2R\ar[r]^{L\Delta}&LRL\ar[r]^{\Delta L}&RL^2}
    \end{split}
  \end{equation}
Algebraic weak factorisation systems were introduced by Grandis-Tholen in \cite{GT} under the name \emph{natural
factorisation system}; later, in \cite{Ga}, Garner added to this definition the distributivity conditions we described
above.

  Each \textsc{awfs} has an underlying \textsc{wfs} $(\mathcal{L},\mathcal{R})$, with $\mathcal{L}=\{f\,|\, f$ has an
  $(L,\Phi)$-coalgebra structure$\}$ and $\mathcal{R}=\{g\,|\, g$ has an $(R,\Lambda)$-algebra structure$\}$.
  A coalgebra structure $(1_X,s:Y\to Ef)$ for $f\in\cL$, so that $s\cdot f=Lf$ and $Rf\cdot s=1_Y$, and an
  $(R,\Lambda)$-algebra structure $(p:Eg\to Z,1_W)$ for $g\in\cR$, so that $g\cdot p=Rg$ and $p\cdot Lg=1_Z$, give a
  natural lifting $d=p\cdot E(u,v)\cdot s$ for a commutative square $v\cdot f=g\cdot u$:
  \begin{equation}
    \label{lift}
    \begin{split}
      \xymatrix{X\ar[rr]^u\ar[dd]_f\ar[rd]^{Lf}&&Z\ar@{=}[r]&Z\ar[dd]^g\\
        &Ef\ar[r]^{E(u,v)}&Eg\ar[ru]^p\ar[rd]^{Rg}\\
        Y\ar@{=}[r]\ar[ru]^s&Y\ar[rr]^v&&W.}
    \end{split}
  \end{equation}
  This lifting is unique -- so that $(\cL,\cR)$ is an \emph{orthogonal factorisation system} -- if, and only if, $\mL$
  and $\mR$ are idempotent. In fact idempotency of $\mL$ implies idempotency of $\mR$ and vice-versa, as shown in
  \cite{BG}.

\subsection{Lax orthogonal factorisation systems}
Informally, a lax orthogonal factorisation system is an \textsc{awfs} whose liftings as in \eqref{lift} have a universal
property, as we explain next. First we recall that:
\begin{udefiC}[\cite{Ko}]
An \Ord-enriched monad $\mS=(S,\eta,\mu)$ is \emph{lax idempotent}, or \emph{Kock-Z\"oberlein}, if it satisfies any of
the following equivalent conditions:
\begin{enumerate}[label=(\roman*)]
\item $S\eta\leq\eta S$;
\item $S\eta\dashv \mu$ (or, equivalently, $S\eta\cdot\mu\leq 1$);
\item $\mu\dashv \eta S$ (or, equivalently, $1\leq \eta S\cdot\mu$);
\item a morphism $f:SX\to X$ is an $\mS$-algebra structure for $X$ if, and only if, $f\dashv \eta_X$ with
    $f\cdot\eta_X=1_X$.
\end{enumerate}
A lax idempotent $\Ord$-comonad is defined dually.
\end{udefiC}

\begin{ulem}
If $\mS=(S,\eta,\mu)$ is a lax idempotent monad on an \Ord-category, the following conditions are equivalent, for an
object $X$ of $\C$:
\begin{enumerate}[label=(\em\roman*)]
\item $X$ admits an $\mS$-algebra structure;
\item $X$ admits a unique $\mS$-algebra structure;
\item $\eta_X:X\to SX$ has a right inverse, i.e. $X$ admits an $(S,\eta)$-algebra structure;
\item $X$ is a retract of $SX$;
\item $X$ is a retract of an $\mS$-algebra.
\end{enumerate}
\end{ulem}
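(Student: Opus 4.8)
The plan is to establish the cyclic chain $(i)\Rightarrow(iii)\Rightarrow(iv)\Rightarrow(v)\Rightarrow(i)$, supplemented by $(i)\Rightarrow(ii)$ and the trivial $(ii)\Rightarrow(i)$. The whole argument hinges on the last clause of the preceding Definition: a morphism $f\colon SX\to X$ is an $\mS$-algebra structure precisely when $f\dashv\eta_X$ and $f\cdot\eta_X=1_X$. Once this characterisation is available, all the implications except one become short formal manipulations, and the single substantial point is $(v)\Rightarrow(i)$.

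First I would dispatch the routine steps. For $(i)\Rightarrow(iii)$: an $\mS$-algebra structure $a$ satisfies $a\cdot\eta_X=1_X$ by the Definition, which is exactly an $(S,\eta)$-algebra structure, i.e.\ a retraction of $\eta_X$. For $(iii)\Rightarrow(iv)$: the pair $(\eta_X,a)$ with $a\cdot\eta_X=1_X$ displays $X$ as a retract of $SX$. For $(iv)\Rightarrow(v)$: the object $SX$ carries the free $\mS$-algebra structure $\mu_X$, so a retract of $SX$ is a retract of an $\mS$-algebra. The uniqueness $(i)\Rightarrow(ii)$ is the standard uniqueness of adjoints: if $a,a'$ are both algebra structures then both are left adjoint to $\eta_X$, so from $1_{SX}\leq\eta_X\cdot a'$ and $a\cdot\eta_X=1_X$ one gets $a=a\cdot 1_{SX}\leq a\cdot\eta_X\cdot a'=a'$, and symmetrically $a'\leq a$; anti-symmetry of the hom-posets then forces $a=a'$.

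The hard part will be $(v)\Rightarrow(i)$: producing an algebra structure on a retract $X\xrightarrow{s}A\xrightarrow{r}X$ (with $r\cdot s=1_X$) of an $\mS$-algebra $(A,\alpha)$. My candidate is $a:=r\cdot\alpha\cdot Ss\colon SX\to X$. That $a\cdot\eta_X=1_X$ is a one-line naturality computation: $Ss\cdot\eta_X=\eta_A\cdot s$ gives $a\cdot\eta_X=r\cdot\alpha\cdot\eta_A\cdot s=r\cdot s=1_X$, using $\alpha\cdot\eta_A=1_A$. The crux — and the only place lax idempotency is genuinely needed — is the adjunction $a\dashv\eta_X$, that is, the unit inequality $1_{SX}\leq\eta_X\cdot a$. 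Here I would invoke that $(A,\alpha)$ being an algebra yields, again by the Definition, $\alpha\dashv\eta_A$, whence $1_{SA}\leq\eta_A\cdot\alpha$. Composing on the left with $Sr$ and using naturality $\eta_X\cdot r=Sr\cdot\eta_A$ gives $Sr\leq Sr\cdot\eta_A\cdot\alpha=\eta_X\cdot r\cdot\alpha$; post-composing with $Ss$ and using $Sr\cdot Ss=S(r\cdot s)=1_{SX}$ then yields $1_{SX}\leq\eta_X\cdot r\cdot\alpha\cdot Ss=\eta_X\cdot a$. With $a\cdot\eta_X=1_X$ already in hand, the Definition identifies $a$ as the $\mS$-algebra structure, closing the cycle and completing the proof.
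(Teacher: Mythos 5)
Your proof is correct. The paper states this lemma without proof (it is standard Kock--Z\"oberlein theory, going back to Kock's paper cited there), and your argument --- the cycle through retracts with the candidate structure $a=r\cdot\alpha\cdot Ss$ on a retract of an algebra, verified via the adjoint characterisation $a\dashv\eta_X$, $a\cdot\eta_X=1_X$, plus uniqueness of left adjoints for (ii) --- is exactly the standard argument the paper implicitly invokes.
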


An \textsc{awfs} $(\mL,\mR)$ is a \emph{lax orthogonal factorisation system}, abbreviated \textsc{lofs}, if $\mL$ and
$\mR$ are lax idempotent. These factorisations were introduced by the authors in \cite{CLF16} and further studied in the
\Ord-enriched categories setting, as used here, in \cite{CLF17}.
\begin{ucor}
If $(\mL,\mR)$ is a \textsc{lofs}, then its underlying weak factorisation system $(\cL,\cR)$ consists of the class $\cL$
of the morphisms admitting a (unique) $\mL$-coalgebra structure and the class $\cR$ consists of the morphisms admitting a
(unique) $\mR$-algebra structure.
\end{ucor}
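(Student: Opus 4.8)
The plan is to deduce the statement directly from the preceding Lemma, applied to the lax idempotent monad $\mR$ on the arrow category $\C^\two$ and, dually, to the lax idempotent comonad $\mL$. Recall that the underlying \textsc{wfs} of the \textsc{awfs} $(\mL,\mR)$ was defined by letting $\cR$ consist of the morphisms $g$ admitting an $(R,\Lambda)$-algebra structure and $\cL$ consist of the morphisms $f$ admitting an $(L,\Phi)$-coalgebra structure; the only real point requiring attention is to recognise these as conditions (iii) of the Lemma and of its dual.

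First I would observe that, since $(\mL,\mR)$ is a \textsc{lofs}, the \Ord-monad $\mR=(R,\Lambda,\Pi)$ on $\C^\two$ is lax idempotent, so the Lemma applies verbatim with $\mS=\mR$ and with $\C^\two$ in the role of the ambient \Ord-category. Its ``objects'' are then the morphisms $g$ of $\C$, and an $(S,\eta)$-algebra structure in the sense of (iii) is exactly a right inverse of the unit component $\Lambda_g$, i.e.\ a pair $(p,1)$ with $p\colon Eg\to Z$ satisfying $g\cdot p=Rg$ and $p\cdot Lg=1$ --- precisely an $(R,\Lambda)$-algebra structure in the sense recalled above. Hence the equivalence (ii)$\Leftrightarrow$(iii) of the Lemma states that $g$ admits an $(R,\Lambda)$-algebra structure if and only if it admits a unique $\mR$-algebra structure, identifying $\cR$ with the class of morphisms admitting a (unique) $\mR$-algebra structure.

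The argument for $\cL$ is the order-theoretic dual. Applying the dual of the Lemma (the variant for lax idempotent comonads, indicated after the Definition) to the lax idempotent comonad $\mL=(L,\Phi,\Sigma)$ on $\C^\two$, a morphism $f$ admits an $(L,\Phi)$-coalgebra structure --- a right inverse $(1,s)$ of the counit $\Phi_f$, with $s\cdot f=Lf$ and $Rf\cdot s=1$ --- if and only if it admits a unique $\mL$-coalgebra structure. Since $\cL$ was defined through $(L,\Phi)$-coalgebra structures, this gives the assertion for $\cL$.

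I do not anticipate a genuine obstacle, as all the mathematical content is packaged in the Lemma. The work is the bookkeeping that matches the pointed- and copointed-endofunctor structures of the \textsc{awfs} with conditions (iii) of the Lemma and of its dual, together with the observation that the relevant monad and comonad live on $\C^\two$ and are lax idempotent precisely by the hypothesis that $(\mL,\mR)$ is a \textsc{lofs}.
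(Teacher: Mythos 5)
Your proposal is correct and follows exactly the route the paper intends: the corollary is stated there without proof precisely because, as you observe, it is the Lemma (applied to the lax idempotent monad $\mR$ on $\C^\two$) together with its dual (applied to the lax idempotent comonad $\mL$), combined with the definition of the underlying \textsc{wfs} of an \textsc{awfs} via $(L,\Phi)$-coalgebra and $(R,\Lambda)$-algebra structures. Your bookkeeping identifying $(R,\Lambda)$-algebra structures with sections $(p,1)$ of $\Lambda_g$, and dually for $\Phi_f$, is exactly the matching required, so nothing is missing.
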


As for orthogonal factorisation systems, lax idempotency of $\mL$ implies lax idempotency of $\mR$ and vice-versa. In
fact:
\begin{uthmC}[\cite{CLF17}]\leavevmode
\begin{enumerate}[label=(\arabic*)]
\item Given an \textsc{awfs} $(\mL,\mR)$ on an \Ord-category \C, the following conditions are equivalent:
\begin{enumerate}[label=(\em\roman*)]
\item $(\mL,\mR)$ is a \textsc{lofs};
\item $\mL$ is lax idempotent;
\item $\mR$ is lax idempotent.
\end{enumerate}
\item Given a domain-preserving \Ord-comonad $\mL$ and a codomain-preserving \Ord-monad $\mR$ inducing the same
    \Ord-functorial factorisation $f=Rf\cdot Lf$, the following conditions are equivalent:
\begin{enumerate}[label=(\em\roman*)]
\item $(\mL,\mR)$ is a \textsc{lofs}.
\item Both $\mL$ and $\mR$ are lax idempotent.
\item One of $\mL$ and $\mR$ is lax idempotent and the distributive law axioms \eqref{distrib} hold.
\end{enumerate}
\end{enumerate}
\end{uthmC}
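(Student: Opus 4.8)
The plan is to treat the two parts separately, using the Kock--Z\"oberlein characterisation of lax idempotency recalled in the Definition above as the main tool, with the distributive law \eqref{distrib} serving as the bridge between $\mL$ and $\mR$. For part~(1), since a \textsc{lofs} is by definition an \textsc{awfs} in which both $\mL$ and $\mR$ are lax idempotent, statement~(i) is literally the conjunction of (ii) and (iii); hence the entire content of part~(1) is the equivalence (ii)~$\Leftrightarrow$~(iii), namely that for an \textsc{awfs} lax idempotency of the comonad is equivalent to lax idempotency of the monad. I would prove this directly, transferring the property across $\Delta$, rather than through the underlying \textsc{wfs}.

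First I would rewrite each side as a single pointwise inequality. By condition~(ii) of the Definition, $\mR$ is lax idempotent exactly when $R\Lambda\dashv\Pi$, and dually $\mL$ is lax idempotent exactly when $\Sigma\dashv L\Phi$; in both cases the unit (resp.\ counit) of the adjunction is forced to be an identity by the (co)monad laws, so only one inequality survives. Evaluating at an arrow $f$ and reading off the nontrivial components, these two adjunctions become inequalities in the hom-posets of $\C$ between composites of the structure maps $\sigma_f$, $\pi_f$, $\lambda_f$, $\rho_f$ and their iterates. The two equalities $\rho_{Lf}\cdot\sigma_f=1$ and $\pi_f\cdot\lambda_{Rf}=1$ coming from the (co)unit laws, together with the axioms \eqref{distrib} that relate $\sigma_f$ and $\pi_f$ through $\Delta$, are exactly what is needed to convert the $\mR$-inequality into the $\mL$-inequality and back; concretely each appears as the mate of the other under $\Delta$, and taking mates preserves adjunctions. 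This yields (ii)~$\Leftrightarrow$~(iii), and together with the definitional identification of (i) with (ii)$\,\wedge\,$(iii) completes part~(1).

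For part~(2) the hypotheses are weaker: $\mL$ and $\mR$ are only assumed to be a comonad and a monad inducing the same factorisation, with no distributive law given. The implications (i)~$\Rightarrow$~(ii) and (i)~$\Rightarrow$~(iii) are immediate, since a \textsc{lofs} is an \textsc{awfs} and therefore already satisfies \eqref{distrib} and has both halves lax idempotent. For (iii)~$\Rightarrow$~(i) I would observe that once \eqref{distrib} is assumed, the data $(\mL,\mR)$ is precisely an \textsc{awfs}, so part~(1) applies and upgrades ``one of $\mL,\mR$ lax idempotent'' to ``both'', that is, to a \textsc{lofs}. Thus the whole weight of part~(2) falls on (ii)~$\Rightarrow$~(iii): assuming both $\mL$ and $\mR$ lax idempotent, one must derive the distributivity axioms \eqref{distrib} for the canonically induced $\Delta_f=(\sigma_f,\pi_f)$.

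This last implication is the main obstacle, and it is where lax idempotency does real work. The point is that lax idempotent (co)monads are property-like: by the Lemma above, an object carries at most one $\mR$-algebra (resp.\ $\mL$-coalgebra) structure, and more generally the relevant comparison $2$-cells are determined by universal properties rather than chosen. I would exploit this as follows. Each of the two diagrams in \eqref{distrib} is an equation between a pair of $2$-cells with common source and target; evaluated at $f$ both sides are morphisms of $\C^\two$ between (co)free objects such as $RLf$ and $LRf$. Using that $\sigma_f$ and $\pi_f$ are the unique maps compatible with the unique (co)algebra structures, and that adjoints are unique when they exist, I would show that the two composites in each axiom satisfy the same defining adjunction and hence coincide; naturality of $\Delta$ follows by the same uniqueness. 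In short, for lax idempotent $\mL$ and $\mR$ the mixed distributive law is forced, giving (ii)~$\Rightarrow$~(iii). The delicate part will be the bookkeeping of the several whiskerings $\Delta R$, $R\Delta$, $\Sigma R$, $L\Delta$ and matching each composite to the correct universal property; the conceptual content, however, is simply that property-like structure cannot fail to be coherent.
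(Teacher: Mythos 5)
This theorem is quoted from \cite{CLF17}; the paper itself gives no proof, so your sketch can only be measured against the cited source. Against that, your decomposition is essentially the right one: part~(1) does reduce, by the paper's definition of \textsc{lofs}, to the equivalence (ii)$\Leftrightarrow$(iii), proved by transferring the single surviving adjunction inequality across $\Delta$ using the (co)unit identities $\rho_{Lf}\cdot\sigma_f=1$, $\pi_f\cdot\lambda_{Rf}=1$ and the axioms \eqref{distrib}; and you correctly identify that the whole weight of part~(2) is (ii)$\Rightarrow$(iii), i.e.\ that for a lax idempotent pair over a common factorisation the distributive law is automatic --- this is exactly the fact that lets \cite{CLF16,CLF17} define a \textsc{lofs} without mentioning \eqref{distrib}.

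Two cautions. First, ``each appears as the mate of the other under $\Delta$'' is an abuse: $\Delta$ is a natural transformation, not (one leg of) an adjunction, so no mates calculus is available; what actually happens --- and what you also say --- is whiskering the inequality $R\Lambda\cdot\Pi\leq 1$ (resp.\ its comonad dual) with $\Delta$ and invoking \eqref{distrib}. Second, in part~(2) the slogan that property-like structure ``cannot fail to be coherent'' is not yet an argument: to obtain each equality in \eqref{distrib} one must exhibit the two composites as adjoints, with identity (co)unit, to one and the same morphism, and then use uniqueness of adjoints in a poset-enriched category --- equivalently, chase $\leq$ in both directions from the Kock--Z\"oberlein inequalities. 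That bookkeeping is precisely the content of the proof in \cite{CLF17}; your sketch asserts, rather than establishes, that both composites satisfy ``the same defining adjunction'', which is where all the work lies, though the strategy itself is sound.
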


\subsection{Lifting operations}
Diagram \eqref{lift} shows that every functorial factorisation system induces a canonical lifting operation from the
forgetful \Ord-functor $U:(L,\Phi)$-$\Coalg\to\C^2$ to the forgetful \Ord-functor $V:(R,\Lambda)$-$\Alg\to\C^2$, meaning
that every commutative diagram
\begin{equation}
\label{liftop}
\begin{split}
\xymatrix{.\ar[r]^h\ar[d]_{Ua}&.\ar[d]^{Vb}\\
.\ar[r]^k&.}
\end{split}
\end{equation}
has a canonical diagonal filler $\phi_{a,b}(h,k)$ so that $Vb\cdot\phi_{a,b}(h,k)=k$, $\phi_{a,b}(h,k)\cdot Ua=h$. Those
fillers respect both composition and order in a natural way (see \cite{CLF17} for details).

A lifting operation from $U:\A\to\C^2$ to $V:\B\to\C^2$ is said to be \textsc{kz} if, for every commutative diagram
\eqref{liftop} and every diagonal filler $d$, one has $\phi_{a,b}(h,k)\leq d$.
\begin{uthmC}[\cite{CLF17}]
For an \textsc{awfs} $(\mL,\mR)$ on an \Ord-category \C, the following conditions are equivalent:
\begin{enumerate}[label=(\em\roman*)]
\item $(\mL,\mR)$ is a \textsc{lofs}.
\item The lifting operation from $\mL$-$\Coalg\to\C^2$ to $\mR$-$\Alg\to\C^2$ is \textsc{kz}.
\end{enumerate}
\end{uthmC}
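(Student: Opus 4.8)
The plan is to prove the two implications separately, in both cases by evaluating the canonical filler $\phi_{a,b}(h,k)=p\cdot E(h,k)\cdot s$ of \eqref{liftop} on a suitable square and reading off the (co)algebra adjunctions supplied by lax idempotency. I write a coalgebra $a$ on $f$ as a section $s\colon\cod(f)\to Ef$ with $Rf\cdot s=1$ and $s\cdot f=Lf$, and an algebra $b$ on $g$ as a retraction $p\colon Eg\to\dom(g)$ with $g\cdot p=Rg$ and $p\cdot Lg=1$. By the definition of lax idempotency, $\mR$ lax idempotent means exactly that each $p$ is left adjoint to $Lg$ (so $1\le Lg\cdot p$), and $\mL$ lax idempotent that each $s$ is right adjoint to $Rf$ (so $1\le s\cdot Rf$); these, together with the naturality squares $E(h,k)\cdot Lf=Lg\cdot h$ and $Rg\cdot E(h,k)=k\cdot Rf$, are the only inputs I use.

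For (i)$\Rightarrow$(ii) assume $\mL,\mR$ lax idempotent and let $d$ be any filler of \eqref{liftop}, so $d\cdot f=h$ and $g\cdot d=k$. Since $p\dashv Lg$, the desired $\phi_{a,b}(h,k)=p\cdot E(h,k)\cdot s\le d$ is equivalent, by transposition, to $E(h,k)\cdot s\le Lg\cdot d$. To prove the latter I observe that $(h,\,Lg\cdot d)\colon f\to Lg$ is a morphism of $\C^2$ (because $Lg\cdot h=Lg\cdot d\cdot f$), and apply the right adjoint section $\sigma_g$ of the cofree coalgebra $Lg$, which exists since $\mL$ is lax idempotent. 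Transposing along $RLg\dashv\sigma_g$ and using $RLg\cdot E(h,Lg\cdot d)=Lg\cdot d\cdot Rf$ together with $Rf\cdot s=1$ gives $E(h,Lg\cdot d)\cdot s\le\sigma_g\cdot Lg\cdot d=LLg\cdot d$. Finally $(h,k)=\Phi_g\circ(h,Lg\cdot d)$, so $E(h,k)=E\Phi_g\cdot E(h,Lg\cdot d)$; post-composing the last inequality with $E\Phi_g$ and simplifying by $E\Phi_g\cdot LLg=(E\Phi_g\cdot\sigma_g)\cdot Lg=Lg$ yields $E(h,k)\cdot s\le Lg\cdot d$, as required.

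For (ii)$\Rightarrow$(i) I show $\mL$ is lax idempotent and then invoke the theorem asserting that an \textsc{awfs} is a \textsc{lofs} iff $\mL$ is lax idempotent. Let $(x,c)$ be any $\mL$-coalgebra and consider the factorisation square $(Lx,Rx)\colon Lx\to Rx$, with the cofree coalgebra $Lx$ (structure $\sigma_x$) on the left and the free algebra $Rx$ (structure $\pi_x$) on the right. Its canonical filler is $\pi_x\cdot E(Lx,Rx)\cdot\sigma_x$; factoring $(Lx,Rx)=\Lambda_x\circ\Phi_x$ and using the unit laws $\pi_x\cdot E\Lambda_x=1$ and $E\Phi_x\cdot\sigma_x=1$ shows this filler equals $1_{Ex}$. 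But $c\cdot Rx$ is also a filler, since $c\cdot Rx\cdot Lx=c\cdot x=Lx$ and $Rx\cdot c\cdot Rx=Rx$. The \textsc{kz} hypothesis therefore gives $1_{Ex}\le c\cdot Rx$, i.e.\ $Rx\dashv c$. Evaluating this at the cofree coalgebras $x=Lf$ (so $c=\sigma_f$) gives $1\le\sigma_f\cdot RLf$, whence $E\Phi_f=(E\Phi_f\cdot\sigma_f)\cdot RLf\le RLf$; this is the inequality $L\Phi\le\Phi L$, one of the equivalent (dual) forms of lax idempotency of $\mL$.

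The bookkeeping ingredients---the factorisations $(h,k)=\Phi_g\circ(h,Lg\cdot d)$ and $(Lx,Rx)=\Lambda_x\circ\Phi_x$ in $\C^2$, and the counit/unit identities $E\Phi\cdot\sigma=1$, $\pi\cdot E\Lambda=1$, $\sigma\cdot L(-)=LL(-)$---are immediate from the comonad and monad laws. The genuinely delicate point, and the crux of the argument, is the choice of the auxiliary morphism $(h,\,Lg\cdot d)\colon f\to Lg$ in (i)$\Rightarrow$(ii): it is what converts the bare section-adjointness of $s$ into the two-dimensional comparison $E(h,k)\cdot s\le Lg\cdot d$, and hence, after transposing through $p\dashv Lg$, into minimality of the canonical filler. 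The other place demanding care is keeping every adjunction oriented consistently, so that the \textsc{kz} hypothesis extracts \emph{lax} rather than colax idempotency of $\mL$ in the converse direction.
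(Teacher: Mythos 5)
The paper does not prove this theorem: it is quoted from \cite{CLF17} as an imported result, so there is no internal proof to compare against. Judged on its own, your argument is correct, and it is in the same adjunction-transposition spirit as the treatment in the cited source. I checked the computations. In (i)$\Rightarrow$(ii): the transposition through $p\dashv Lg$ is valid because, $\mR$ being lax idempotent, the algebra structure $(p,1_W)$ is left adjoint to the unit $\Lambda_g=(Lg,1_W)$ in $\C^2$, and adjunctions there are componentwise; the auxiliary morphism $(h,\,Lg\cdot d)\colon f\to Lg$ does commute since $d\cdot f=h$; the identity $RLg\cdot E(h,Lg\cdot d)\cdot s=Lg\cdot d\cdot Rf\cdot s=Lg\cdot d$ followed by transposition through $RLg\dashv\sigma_g$ (the cofree coalgebra, available by lax idempotency of $\mL$) gives $E(h,Lg\cdot d)\cdot s\leq LLg\cdot d$; and the final simplification via $(h,k)=\Phi_g\circ(h,Lg\cdot d)$ and $E\Phi_g\cdot LLg=Lg$ is sound. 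In (ii)$\Rightarrow$(i): the canonical filler of the square $(Lx,Rx)\colon Lx\to Rx$ is indeed $1_{Ex}$ by the unit/counit laws $\pi_x\cdot E\Lambda_x=1$ and $E\Phi_x\cdot\sigma_x=1$; comparing with the filler $c\cdot Rx$ under the \textsc{kz} hypothesis gives $Rx\dashv c$, and instantiating at $x=Lf$, $c=\sigma_f$ yields $E\Phi_f\leq RLf$, which is exactly $L\Phi\leq\Phi L$, the correct dual of condition (i) in the paper's Kock definition (so you do extract \emph{lax}, not colax, idempotency).

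Three small remarks. First, your converse direction is not self-contained: it proves only that $\mL$ is lax idempotent and then invokes part (1) of the other theorem quoted from \cite{CLF17} to conclude that $(\mL,\mR)$ is a \textsc{lofs}; within this paper, which records that theorem, this is legitimate, but strictly speaking you have reduced one cited theorem to another rather than proved the equivalence outright. Second, in (i)$\Rightarrow$(ii) you list $1\leq s\cdot Rf$ among your inputs but never use it: only the copointed axiom $Rf\cdot s=1$ for the given coalgebra and the cofree instance $RLg\dashv\sigma_g$ enter the argument. Third, you describe (co)algebras by the copointed/pointed equations alone, omitting (co)associativity; this is harmless here, both because the omitted axioms are never needed in your computations and because for lax idempotent (co)monads the two notions coincide, but in direction (ii)$\Rightarrow$(i), where lax idempotency is not yet available, it matters that you only ever instantiate at cofree coalgebras and free algebras, for which the full axioms hold automatically --- which your proof does.
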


\subsection{Simple monads and their {\normalfont\textsc{lofs}s}}\label{sub:2.5}
The notion of simple monad we present here, studied in \cite{CLF16, CLF17}, is the \Ord-enriched version of simple
reflection of \cite{CHK}. In an $\Ord$-enriched category $\C$ with comma-objects, given an \Ord-monad $\mS=(S,\eta,\mu)$, we construct a monad $\mR$ on $\C^2$ by considering the
comma-object $Kf=Sf\downarrow\eta_Y$ and defining $Rf:Kf\to Y$ as the second projection. Then $Lf$ is the unique morphism
making the following diagram commute.
\[\xymatrix{X\ar[rdd]_f\ar[rd]|{Lf}\ar[rrd]^{\eta_X}\\
&Kf\ar[r]|{q_f}\ar[d]|{Rf}&SX\ar[d]^{Sf}\\
&Y\ar@{}[ru]|{\geq}\ar[r]_{\eta_Y}&SY}\]
The \Ord-functorial factorisation $f=Rf\cdot Lf$ defines a copointed endo-$\Ord$-functor $(L,\Phi:L\Rightarrow 1)$, with
$\Phi_f=(1_X,Rf)$, and a pointed endo-$\Ord$-functor $(R,\Lambda)$, with $\Lambda_f=(Lf,1_Y)$. Moreover, $(R,\Lambda)$
underlies a monad $\mR$ on $\C^2$ whose multiplication $\Pi_f=(\pi_f,1_Y)$ is defined by the unique morphism $\pi_f$
given by the universal property of the comma-object:
\[\xymatrix{KRf\ar[rd]^{\pi_f}\ar[rr]^{q_{Rf}}\ar[rdd]_{RRf}&&SKf\ar[rr]^{Sq_f}&&SSX\ar[d]^{\mu_X}\\
&Kf\ar[rrr]^{q_f}\ar[d]|{Rf}&&&SX\ar[d]^{Sf}\\
&Y\ar[rrr]_{\eta_Y}\ar@{}[rrru]|{\geq}&&&SY}\]
(See \cite{CLF17} for details.)

\begin{ulem}
Given a monad $\mS$ on $\C$, the following conditions are equivalent for a morphism $f:X\to Y$ in \C:
\begin{enumerate}[label=(\em\roman*)]
\item $f$ has an $(L,\Phi)$-coalgebra structure.
\item $Sf$ is a {\lari} (=\emph{l}eft \emph{a}djoint \emph{r}ight \emph{i}nverse\footnote{We use the terminology introduced by J.W. Gray in \cite{Gr}.}), that is, it has a right adjoint
    $S^*f$ such that $S^*f\cdot Sf=1$.
\end{enumerate}
\end{ulem}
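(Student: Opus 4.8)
The plan is to first make condition (i) explicit. A $(L,\Phi)$-coalgebra structure on $f$ is a morphism $f\to Lf$ of $\C^2$ whose composite with $\Phi_f=(1_X,Rf)$ is the identity; since $\dom(\Phi)=1$, its domain component is forced to be $1_X$, so such a structure is exactly a morphism $s\colon Y\to Kf$ with $Rf\cdot s=1_Y$ and $s\cdot f=Lf$. Throughout I would use only the defining data of the comma object $Kf=Sf\downarrow\eta_Y$: the projections $q_f\colon Kf\to SX$ and $Rf\colon Kf\to Y$, the universal inequality $Sf\cdot q_f\le\eta_Y\cdot Rf$, the identities $q_f\cdot Lf=\eta_X$ and $Rf\cdot Lf=f$, and the one-dimensional universal property, namely that a morphism into $Kf$ is determined by its two projections (the mediating $2$-cell being unique in the posetal setting). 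The equivalence is then obtained by transporting between $s$ and the right adjoint $S^*f$, invoking only these facts together with the monad identities.

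For (i)$\Rightarrow$(ii) I would set $S^*f:=\mu_X\cdot S(q_f\cdot s)\colon SY\to SX$. The right-inverse equation is immediate: since $q_f\cdot s\cdot f=q_f\cdot Lf=\eta_X$, one gets $S^*f\cdot Sf=\mu_X\cdot S(q_f\cdot s\cdot f)=\mu_X\cdot S\eta_X=1_{SX}$. For the counit I would post-compose the comma inequality with $s$: as $Rf\cdot s=1_Y$, this yields $Sf\cdot q_f\cdot s\le\eta_Y$, and then naturality of $\mu$ gives $Sf\cdot S^*f=\mu_Y\cdot S(Sf\cdot q_f\cdot s)\le\mu_Y\cdot S\eta_Y=1_{SY}$. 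Since triangle identities are automatic between posets, this proves $Sf\dashv S^*f$ with $S^*f\cdot Sf=1$, i.e.\ $Sf$ is a \lari.

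Conversely, for (ii)$\Rightarrow$(i) I start from $S^*f$ with $S^*f\cdot Sf=1_{SX}$ and counit $Sf\cdot S^*f\le 1_{SY}$, and construct $s$ by the universal property of $Kf$. The pair $(S^*f\cdot\eta_Y,\,1_Y)$ satisfies the comma inequality because $Sf\cdot S^*f\cdot\eta_Y\le\eta_Y$; let $s\colon Y\to Kf$ be the induced map, so $q_f\cdot s=S^*f\cdot\eta_Y$ and $Rf\cdot s=1_Y$. The second identity is already one of the coalgebra conditions. For the other, $s\cdot f=Lf$, I would compare projections: $Rf\cdot(s\cdot f)=f=Rf\cdot Lf$ and, using naturality of $\eta$ together with $S^*f\cdot Sf=1$, $q_f\cdot(s\cdot f)=S^*f\cdot\eta_Y\cdot f=S^*f\cdot Sf\cdot\eta_X=\eta_X=q_f\cdot Lf$; the one-dimensional universal property then forces $s\cdot f=Lf$.

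I expect the only delicate points to be the following. The orientation of the comma inequality $Sf\cdot q_f\le\eta_Y\cdot Rf$ is precisely what makes the counit point the right way in (i)$\Rightarrow$(ii); with the opposite orientation the same computation would instead exhibit $Sf$ as a right-adjoint \emph{left} inverse. And the identity $s\cdot f=Lf$ is not a diagram chase but an appeal to the one-dimensional universal property of the comma object, so I would be careful to record that a map into $Kf$ is pinned down by its two projections. Everything else is a routine use of the monad laws and of the local monotonicity of $S$ and $\mu$.
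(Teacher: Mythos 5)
Your proposal is correct and follows essentially the same route as the paper: in (i)$\Rightarrow$(ii) you take the same candidate $S^*f=\mu_X\cdot Sq_f\cdot Ss$ and verify the left-inverse identity and the counit inequality via the comma $2$-cell and naturality of $\mu$ (merely reordering whether $S$ is applied before or after composing with $s$), and in (ii)$\Rightarrow$(i) you induce $s$ from the pair $(S^*f\cdot\eta_Y,\,1_Y)$ and pin down $s\cdot f=Lf$ by comparing the two projections, exactly as in the paper. Your explicit remarks on the orientation of the comma inequality and on the one-dimensional universal property make precise what the paper leaves implicit, but they do not change the argument.
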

\begin{proof}
(i)$\Rightarrow$(ii): If $(1_X,s:Y\to Kf)$ is an $(L,\Phi)$-coalgebra structure for $f:X\to Y$, then $S^*f:=\mu_X\cdot
Sq_f\cdot Ss$ is a left inverse of $Sf$:
\[S^*f\cdot Sf=\mu_X\cdot Sq_f\cdot Ss\cdot Sf=\mu_X\cdot Sq_f\cdot SLf=\mu_X\cdot S\eta_X=1;\]
and, moreover, it is right adjoint to $Sf$:
\[Sf\cdot S^*f=Sf\cdot\mu_X\cdot Sq_f\cdot Ss=\mu_Y\cdot SSf\cdot Sq_f\cdot Ss\leq \mu_Y\cdot S\eta_Y\cdot SRf\cdot Ss=
1.\]

(ii)$\Rightarrow$(i): Let $S^*f$ be a right adjoint left inverse of $Sf$. By definition of comma-object, from $Sf\cdot
(S^*f\cdot\eta_Y)\leq \eta_Y$ there exists a unique $s:Y\to Kf$ such that $Rf\cdot s=1_Y$ and $q_f\cdot
s=S^*f\cdot\eta_Y$. To conclude that $s\cdot f=Lf$, compose $s\cdot f$ with $Rf$ and $q_f$:
\[Rf\cdot s\cdot f=f\mbox{ and }q_f\cdot s\cdot f=S^*f\cdot\eta_Y\cdot f=S^*f\cdot Sf\cdot\eta_X=\eta_X.\tag*{\qEd}\]
\def\popQED{}
\end{proof}

A morphism $f$ in \C\ such that $Sf$ is a \lari{} is called an \emph{$S$-embedding}. Denote by $\mS$-$\Emb$ the category
that has as objects pairs $(f,r)$ of morphisms of \C\ such that $Sf\dashv r$ with $r\cdot Sf=1$, and as morphisms
$(h,k):(f,r)\to (g,s)$ morphisms $(h,k):f\to g$ in $\C^2$ such that $s\cdot Sk=Sh\cdot r$.

\begin{udefi}
The \Ord-monad $\mS$ is said to be \emph{simple} if the locally monotone forgetful functor $\mS$-$\Emb\to\C^2$ has a
right adjoint and the induced comonad has underlying functor $L$ and counit $\Phi$.
\end{udefi}

As shown in \cite{CLF17}:
\begin{uprop}
A lax idempotent monad $\mS=(S,\eta,\mu)$ on \C\ is simple if, and only if, for every morphism $f:X\to Y$, there is an
adjunction
\[SLf\dashv \mu_X\cdot Sq_f.\]
\end{uprop}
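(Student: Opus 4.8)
The plan is to pin down the right adjoint $G\colon\C^2\to\mS\text{-}\Emb$ explicitly and to recognise that its existence with the prescribed counit is exactly the stated family of adjunctions. Reading $q_f\cdot Lf=\eta_X$ off the defining diagram of the factorisation, one has
\[
(\mu_X\cdot Sq_f)\cdot SLf=\mu_X\cdot S(q_f\cdot Lf)=\mu_X\cdot S\eta_X=1_{SX},
\]
so $\mu_X\cdot Sq_f$ is always a left inverse of $SLf$, and the adjunction $SLf\dashv\mu_X\cdot Sq_f$ collapses to the single counit inequality $SLf\cdot\mu_X\cdot Sq_f\le 1_{SKf}$; equivalently, it asserts precisely that $(Lf,\mu_X\cdot Sq_f)$ is an object of $\mS\text{-}\Emb$. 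Since simplicity forces the underlying functor of $G$ to be $L$, the only candidate is $G(f)=(Lf,\mu_X\cdot Sq_f)$, and the whole statement becomes the claim that this candidate is well defined and right adjoint to the forgetful functor with counit $\Phi$.

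For the implication (adjunction $\Rightarrow$ simple) I would set $G(f)=(Lf,\mu_X\cdot Sq_f)$, legitimate by hypothesis, and extend $G$ to arrows by functoriality of $L$ together with the universal property that defines $K$ on morphisms; the $\mS\text{-}\Emb$-compatibility of $G(h,k)$ is then a naturality check for $\mu$ and $q$. I would take the counit to be $\Phi_f=(1_X,Rf)$ and the unit at an object $(g,s)$ (writing $g\colon X\to Y$) to be $(1,c_g)$, where $c_g\colon Y\to Kg$ is the $(L,\Phi)$-coalgebra structure produced from the \lari{} $s$ by the preceding Lemma. Checking that $(1,c_g)$ really lands in $\mS\text{-}\Emb$ comes down to the identity $\mu_X\cdot S(s\cdot\eta_Y)=s$; granting this, the triangle identities and local monotonicity are routine, and one obtains $U\dashv G$ with $UG=L$ and counit $\Phi$, i.e.\ simplicity.

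For the converse (simple $\Rightarrow$ adjunction), simplicity gives $G(f)=(Lf,r_f)$ with $SLf\dashv r_f$ and $r_f\cdot SLf=1$. Using both defining inequalities one gets immediately
\[
r_f=(\mu_X\cdot Sq_f\cdot SLf)\cdot r_f=\mu_X\cdot Sq_f\cdot(SLf\cdot r_f)\le\mu_X\cdot Sq_f,
\]
so it remains to prove $\mu_X\cdot Sq_f\le r_f$, that is $SLf\cdot\mu_X\cdot Sq_f\le 1$. By naturality of $\mu$ this composite equals $\mu_{Kf}\cdot S(SLf\cdot q_f)$, so it suffices to establish $SLf\cdot q_f\le\eta_{Kf}$. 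Here I would invoke the comonad supplied by simplicity: its comultiplication $\Sigma_f=(1_X,\sigma_f)$ exhibits $Lf$ as an $L$-coalgebra, and composing the comma $2$-cell defining $KLf$ with $\sigma_f$, together with the counit law $R(Lf)\cdot\sigma_f=1_{Kf}$, yields $SLf\cdot(q_{Lf}\cdot\sigma_f)\le\eta_{Kf}$; identifying $q_{Lf}\cdot\sigma_f$ with $q_f$ then closes the argument.

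The step I expect to be the main obstacle is this last identification --- pinning the second component $r_f$ of the cofree $S$-embedding, equivalently the identity $\mu_X\cdot S(s\cdot\eta_Y)=s$ used in the forward direction. Both say that the \lari{} of a morphism between free $S$-algebras is again a morphism of free $S$-algebras, i.e.\ $S$-linear; this is exactly the point at which lax idempotency of $\mS$ is indispensable, since for a Kock--Z\"oberlein monad the mate of the naturality square for $\mu$ along $Sg\dashv s$ can be shown invertible. Once this is secured, the remainder is bookkeeping with the comma-object universal property and the Lemma.
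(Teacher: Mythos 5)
Your proposal is correct, and in fact the paper gives no proof of this Proposition at all --- it is quoted from \cite{CLF17} (``As shown in \cite{CLF17}'') --- so the only question is whether your argument is self-supporting; it is, modulo the one step you yourself flagged. Everything else checks out: $q_f\cdot Lf=\eta_X$ gives $(\mu_X\cdot Sq_f)\cdot SLf=1$ unconditionally, so the stated adjunction is precisely the assertion that $(Lf,\mu_X\cdot Sq_f)$ lies in $\mS$-$\Emb$; the $\mS$-$\Emb$-compatibility of $G(h,k)$ is naturality of $\mu$ against $q_g\cdot K(h,k)=Sh\cdot q_f$; the nontrivial triangle identity reduces, via the comma projections, to $q_{Lf}\cdot c_{Lf}=(\mu_X\cdot Sq_f)\cdot\eta_{Kf}=q_f$ and $RLf\cdot c_{Lf}=1$. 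In the converse direction, the identification $q_{Lf}\cdot\sigma_f=q_f$ that you worried about is actually immediate: the second counit law $L\Phi_f\cdot\Sigma_f=1_{Lf}$ gives $K(1_X,Rf)\cdot\sigma_f=1_{Kf}$, and $q_f\cdot K(1_X,Rf)=q_{Lf}$, so $SLf\cdot q_f\le\eta_{Kf}$ and hence $SLf\cdot\mu_X\cdot Sq_f=\mu_{Kf}\cdot S(SLf\cdot q_f)\le\mu_{Kf}\cdot S\eta_{Kf}=1$ exactly as you argue. (Note that this direction uses only the comonad laws and the monad laws --- no lax idempotency.)

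The genuinely nonroutine input is the identity you deferred: for $(g,s)\in\mS$-$\Emb$ with $g\colon X\to Y$, one must have $\mu_X\cdot S(s\cdot\eta_Y)=s$, i.e.\ $s$ is strictly $S$-linear. Your mate heuristic is right, and here is the computation that completes it. One inequality holds for any morphism between free algebras of a lax idempotent monad: since $1\le\eta_{SY}\cdot\mu_Y$, naturality of $\eta$ and $\mu_X\cdot\eta_{SX}=1$ give $\mu_X\cdot Ss\le\mu_X\cdot Ss\cdot\eta_{SY}\cdot\mu_Y=\mu_X\cdot\eta_{SX}\cdot s\cdot\mu_Y=s\cdot\mu_Y$, whence $\mu_X\cdot S(s\cdot\eta_Y)\le s\cdot\mu_Y\cdot S\eta_Y=s$. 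For the reverse, transpose twice: by $S\eta_X\dashv\mu_X$, the inequality $s\cdot\mu_Y\le\mu_X\cdot Ss$ is equivalent to $S\eta_X\cdot s\cdot\mu_Y\le Ss$, and by $SSg\dashv Ss$ this is equivalent to $SSg\cdot S\eta_X\cdot s\cdot\mu_Y\le 1_{SSY}$; now $SSg\cdot S\eta_X=S(\eta_Y\cdot g)=S\eta_Y\cdot Sg$, and since $Sg\cdot s\le 1$ the left-hand side is below $S\eta_Y\cdot\mu_Y\le 1$, the Kock--Z\"oberlein law. Precomposing $s\cdot\mu_Y\le\mu_X\cdot Ss$ with $S\eta_Y$ yields $s\le\mu_X\cdot S(s\cdot\eta_Y)$, as required. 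Observe that the crucial point is that $Sg$ is a \emph{free} map (so that $Sg\cdot\eta_X=\eta_Y\cdot g$), which is exactly your remark that the \lari{} of a morphism between free algebras is again $S$-linear; for a general homomorphism between free algebras the right adjoint need not be strict. With this lemma supplied, your proof is complete, and it is the natural construction one expects \cite{CLF17} itself to use, since, as you note, $(Lf,\mu_X\cdot Sq_f)$ is the only possible value of the right adjoint.
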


\begin{uthm}
If $\mS$ is a lax idempotent and simple monad, then $(\mL,\mR)$ is a \textsc{lofs}. Moreover, the left class $\cL$ of the
weak factorisation system it induces is the class of $S$-embeddings.
\end{uthm}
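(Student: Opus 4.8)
The plan is to present $(\mL,\mR)$ as a comonad--monad pair covered by part~(2) of the quoted Theorem of \cite{CLF17}, and then to verify lax idempotency of both halves, so as to obtain a \textsc{lofs} without checking the distributive law \eqref{distrib} directly. The construction recalled above already provides the codomain-preserving $\Ord$-monad $\mR=(R,\Lambda,\Pi)$ on $\C^2$ together with the domain-preserving copointed endo-$\Ord$-functor $(L,\Phi)$, and the two present the same $\Ord$-functorial factorisation $f=Rf\cdot Lf$. By definition, simplicity of $\mS$ says that the forgetful $\Ord$-functor $\mS\text{-}\Emb\to\C^2$ has a right adjoint whose induced comonad $\mL=(L,\Phi,\Sigma)$ has underlying copointed endofunctor exactly $(L,\Phi)$; in particular $\mL$ is a domain-preserving $\Ord$-comonad. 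We are thus in the situation of part~(2) of that Theorem, so it suffices to prove that both $\mL$ and $\mR$ are lax idempotent: condition~(ii) there then gives at once that $(\mL,\mR)$ is an \textsc{awfs} satisfying \eqref{distrib} and is a \textsc{lofs}.

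For $\mL$ I would use the Proposition characterising simplicity: since $\mS$ is lax idempotent and simple, for every $f\colon X\to Y$ there is an adjunction $SLf\dashv\mu_X\cdot Sq_f$. The co-\textsc{kz} inequality for $\mL$ (the comonad dual of condition~(i), an inequality between $\Phi L$ and $L\Phi$) has identity first component, because $L$ is domain-preserving and $\Phi$ has identity domain-component, so it reduces to comparing two morphisms $KLf\to Kf$ into the comma object $Kf$. Such a comparison is detected by the projections $q_f$ and $Rf$: after post-composing, the $Rf$-projection gives an equality, whereas the $q_f$-projection reduces precisely to the unit--counit comparison of $SLf\dashv\mu_X\cdot Sq_f$. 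Monotonicity of composition then yields the required inequality, so $\mL$ is lax idempotent.

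For $\mR$ I would transfer the lax idempotency of $\mS$ through the comma object $Kg=Sg\downarrow\eta_B$ that defines $Rg$. By condition~(iv) of the definition of lax idempotent applied to $\mR$, it suffices to show that a morphism $(p,1_B)\colon Rg\to g$ of $\C^2$ is an $\mR$-algebra structure if, and only if, $(p,1_B)\dashv\Lambda_g$ with $(p,1_B)\cdot\Lambda_g=1$, that is $p\dashv Lg$ with $p\cdot Lg=1$. Through the universal property of $Kg$ and its projections $q_g,\,Rg$, this adjointness corresponds exactly to the adjointness-to-$\eta$ characterisation of $\mS$-algebra structures granted by condition~(iv) for $\mS$, and the $\mR$-algebra associativity and unit laws match the $\mS$-algebra laws under the same translation. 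Hence $\mR$ is lax idempotent, and by the first paragraph $(\mL,\mR)$ is a \textsc{lofs}.

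Finally, to identify the left class, the Corollary above gives that $\cL$ consists exactly of the morphisms admitting an $\mL$-coalgebra structure. Since $\mL$ is lax idempotent, the comonad dual of the Lemma on lax idempotent monads shows that $f$ admits an $\mL$-coalgebra structure if, and only if, it admits an $(L,\Phi)$-coalgebra structure, and by the preceding Lemma this holds precisely when $Sf$ is a \lari, i.e.\ when $f$ is an $S$-embedding; thus $\cL$ is the class of $S$-embeddings. I expect the main obstacle to be the lax idempotency of $\mL$: converting the abstract adjunction $SLf\dashv\mu_X\cdot Sq_f$ into the concrete order inequality between $\Phi L$ and $L\Phi$, since this demands careful bookkeeping of which comma-object projection carries the genuine inequality and of the interaction of $\Sigma$ with the monad data of $\mS$.
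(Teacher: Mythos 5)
Your overall architecture is a legitimate variant of the paper's route, and it is genuinely different: the paper obtains the full \textsc{awfs} (distributivity \eqref{distrib} included) from simplicity via the results of \cite{CLF17}, then proves only that $\mR$ is lax idempotent and concludes by part~(1) of the quoted theorem, whereas you invoke part~(2)(ii) to trade the distributive law for a second lax-idempotency check. Your $\mL$ half does in fact go through: both $\Phi_{Lf}$ and $L\Phi_f$ have identity domain component, the $Rf$-projections of their codomain components $RLf$ and $K(1_X,Rf)\colon KLf\to Kf$ coincide, and post-composing the comma inequality $SLf\cdot q_{Lf}\leq \eta_{Kf}\cdot RLf$ with $\mu_X\cdot Sq_f$ and using the \emph{unit} $1\leq \mu_X\cdot Sq_f\cdot SLf$ of the simplicity adjunction gives $q_{Lf}\leq q_f\cdot RLf$, whence $L\Phi\leq\Phi L$ by the order-detecting projections of $Kf$. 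Note that this is where your proposal correctly consumes simplicity, and it mirrors in structure the computation the paper performs for $\mR$.

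The genuine gap is your $\mR$ half. Condition~(iv) of the definition is a characterisation \emph{equivalent to} lax idempotency, so to use it you must actually prove the biconditional for $\mR$, and your claim that $p\dashv Lg$ with $p\cdot Lg=1$ ``corresponds exactly'' to the adjointness-to-$\eta$ characterisation of $\mS$-algebras is not a proof and is false as stated: an $\mR$-algebra structure on $g\colon Z\to W$ is a morphism $p\colon Kg\to Z$ defined on the comma object $Kg=Sg\downarrow\eta_W$, which is not of the form $S(\mbox{anything})$ unless $W$ is terminal, so there is no $\mS$-algebra structure on $Z$ (or on $Kg$) for it to correspond to; moreover the ``only if'' direction (algebra $\Rightarrow$ adjoint) is precisely the substantive \textsc{kz} content and cannot be obtained by translation alone. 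Symptomatically, your argument for $\mR$ uses only lax idempotency of $\mS$ and never simplicity, whereas the inequality $R\Lambda_f\leq\Lambda_{Rf}$ is exactly where the paper spends the simplicity hypothesis: writing $R\Lambda_f=((R\Lambda_f)_1,1_Y)$, one has $R^2f\cdot(R\Lambda_f)_1=Rf=R^2f\cdot LRf$ on one projection, and on the other $q_{Rf}\cdot (R\Lambda_f)_1=SLf\cdot q_f=SLf\cdot\mu_X\cdot Sq_f\cdot\eta_{Kf}\leq\eta_{Kf}=q_{Rf}\cdot LRf$, the inequality being the \emph{counit} of $SLf\dashv\mu_X\cdot Sq_f$ whiskered with $\eta_{Kf}$; the two-dimensional universal property of the comma object $KRf$ then yields $(R\Lambda_f)_1\leq LRf$. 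You need an input of this kind to close the $\mR$ step; without it the condition-(iv) route is circular. Your closing diagnosis is also inverted: the $\mL$ comparison is the routine one, and $\mR$ is where the substance, and the use of simplicity, lies. The identification of $\cL$ with the $S$-embeddings in your last paragraph is correct and agrees with the paper.
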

\begin{proof}[Proof (Sketch of the proof; for details see \cite{CLF17}).]
Simplicity of $\mS$ gives the comonad structure for $L$ needed to define the \textsc{awfs}.

In order to show that $\mR$ is a lax idempotent monad, that is, $R\Lambda_f\leq\Lambda_{Rf}=(LRf,1_Y)$, we denote
$R\Lambda_f$ by $((R\Lambda_f)_1,1_Y)$ and note that, by definition of $R$, $R^2f\cdot (R\Lambda_f)_1=Rf$ and
$q_{Rf}\cdot (R\Lambda_f)_1=SLf\cdot q_f$. Then
\[q_{Rf}\cdot (R\Lambda_f)_1=\mu_{SKf}\cdot\eta_{S^2Kf}\cdot SLf\cdot q_f=SLf\cdot\mu_X\cdot Sq_f\cdot \eta_{Kf}\leq
\eta_{Kf},\]
by simplicity of $S$. Now, by definition of comma-object and by the equalities $R^2f\cdot LRf=Rf$ and $q_f\cdot
LRf=\eta_{Kf}$, it follows that $(R\Lambda_f)_1\leq LRf$ as claimed.

The last assertion follows from the lemma above.
\end{proof}

\subsection{Submonads of simple monads}\label{sub:2.6}
Well-behaved submonads of simple monads are simple, as stated below.
\begin{uthmC}[\cite{CLF17}]
Suppose that $\varphi:\mS'\to \mS$ is a monad morphism between \Ord-monads whose components are pullback-stable
$\mS$-embeddings, and that $\mS$-embeddings are full. If $\mS$ is lax idempotent, then $\mS'$ is simple whenever $\mS$ is
so. Moreover, every $\mS'$-embedding is an $\mS$-embedding.
\end{uthmC}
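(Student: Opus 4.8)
The plan is to compare the two simple-monad factorisations through a single natural map between the comma objects, and then to transport across it all the relevant adjunctions and coalgebra structures. First I would build, for each $f\colon X\to Y$, a comparison morphism $j_f\colon K'f\to Kf$ between $K'f=S'f\downarrow\eta'_Y$ and $Kf=Sf\downarrow\eta_Y$. Using naturality of $\varphi$ and the unit law $\eta=\varphi\cdot\eta'$, the cone $(\varphi_X\cdot q'_f,\,R'f)$ satisfies $Sf\cdot\varphi_X\cdot q'_f=\varphi_Y\cdot S'f\cdot q'_f\leq\varphi_Y\cdot\eta'_Y\cdot R'f=\eta_Y\cdot R'f$, so the universal property of $Kf$ produces a unique $j_f$ with $q_f\cdot j_f=\varphi_X\cdot q'_f$ and $Rf\cdot j_f=R'f$; a second use of the universal property gives $j_f\cdot L'f=Lf$, and these $j_f$ are natural in $f$, i.e. $K(h,k)\cdot j_g=j_f\cdot K'(h,k)$. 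Moreover the commutative square with sides $q_f\cdot j_f=\varphi_X\cdot q'_f$ is a pullback — fullness of $\varphi_Y$ lets one recover the inequality defining $K'f$ from that defining $Kf$ — so by pullback-stability $j_f$ is again an $\mS$-embedding; being full it is in particular monic, and $Sj_f$ is a \lari, hence order-reflecting.

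The ``moreover'' clause is then immediate from the Lemma identifying $\mS$-embeddings with $(L,\Phi)$-coalgebras. If $f$ is an $\mS'$-embedding it carries an $(L',\Phi')$-coalgebra structure $s'\colon Y\to K'f$ with $R'f\cdot s'=1_Y$ and $s'\cdot f=L'f$; setting $s:=j_f\cdot s'$ gives $Rf\cdot s=R'f\cdot s'=1_Y$ and $s\cdot f=j_f\cdot L'f=Lf$, so $s$ is an $(L,\Phi)$-coalgebra structure and $f$ is an $\mS$-embedding. By fullness this assignment is a functor $\mS'$-$\Emb\to\mS$-$\Emb$ over $\C^2$.

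To prove $\mS'$ simple I would exhibit the right adjoint $G'\colon\C^2\to\mS'$-$\Emb$, $f\mapsto L'f$, with counit $\Phi'_f=(1_X,R'f)$ (all maps involved are locally monotone, so the adjunction is $\Ord$-enriched). The substantial point — and the step I expect to be the main obstacle — is that $L'f$ really is an $\mS'$-embedding, i.e. that $S'L'f$ is a \lari. I would establish the adjunction $S'L'f\dashv\mu'_X\cdot S'q'_f$: the equality $(\mu'_X\cdot S'q'_f)\cdot S'L'f=\mu'_X\cdot S'\eta'_X=1$ is formal, so everything rests on the counit inequality $S'L'f\cdot\mu'_X\cdot S'q'_f\leq 1$. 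This I would prove by applying the order-reflecting map $\Psi:=Sj_f\cdot\varphi_{K'f}$ and reducing to the $\mS$-side: using naturality of $\varphi$, the monad-morphism law $\varphi_X\cdot\mu'_X=\mu_X\cdot S\varphi_X\cdot\varphi_{S'X}$, and the identities $\varphi_X\cdot q'_f=q_f\cdot j_f$ and $j_f\cdot L'f=Lf$, one computes
\[\Psi\cdot S'L'f\cdot\mu'_X\cdot S'q'_f=SLf\cdot(\mu_X\cdot Sq_f)\cdot Sj_f\cdot\varphi_{K'f}\leq Sj_f\cdot\varphi_{K'f}=\Psi,\]
where the inequality is the counit of $SLf\dashv\mu_X\cdot Sq_f$ provided, via the Proposition, by simplicity and lax idempotency of $\mS$. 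Since $Sj_f$ is a \lari{} and $\varphi_{K'f}$ is full, $\Psi$ reflects the order, and the desired inequality follows. This is precisely where pullback-stability (to make $Sj_f$ a \lari) and fullness (to make $\varphi_{K'f}$ order-reflecting) are indispensable.

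It remains to verify the universal property of $\Phi'_f$. Given an $\mS'$-embedding $g$ with coalgebra structure $s'_g$ and a square $(h,k)\colon g\to f$ in $\C^2$, naturality of $j$ together with $s_g=j_g\cdot s'_g$ (from the ``moreover'' clause) shows that the canonical $\mS$-lift $K(h,k)\cdot s_g$ equals $j_f\cdot\bigl(K'(h,k)\cdot s'_g\bigr)$; hence $k':=K'(h,k)\cdot s'_g\colon\cod g\to K'f$ satisfies $R'f\cdot k'=k$ and $k'\cdot g=L'f\cdot h$, giving the required factorisation $(h,k')\colon g\to L'f$ through $\Phi'_f$. Uniqueness follows since $j_f$ is monic: a competing factorisation $(h,k'')$ would yield an $\mS$-lift $j_f\cdot k''$, necessarily equal to $j_f\cdot k'$ by uniqueness of $\mS$-lifts, whence $k''=k'$; and fullness guarantees that $(h,k')$ is a morphism of $\mS'$-embeddings. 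Thus $U'\colon\mS'$-$\Emb\to\C^2$ has a right adjoint $G'$ with $U'G'=L'$ and counit $\Phi'$, which is exactly the assertion that $\mS'$ is simple.
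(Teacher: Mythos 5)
The paper itself gives no proof of this statement --- it is imported from \cite{CLF17} --- so there is no in-text argument to compare against; judged on its own terms, your proposal follows what I take to be the same circle of ideas as the source, and its substantial steps check out. The comparison $j_f\colon K'f\to Kf$ exists and satisfies $j_f\cdot L'f=Lf$ by the comma universal property; the square $q_f\cdot j_f=\varphi_X\cdot q'_f$ is indeed a pullback, since fullness of $\varphi_Y$ converts $Sf\cdot\varphi_X\cdot v\leq\eta_Y\cdot Rf\cdot u$ back into $S'f\cdot v\leq\eta'_Y\cdot Rf\cdot u$ and the comma projections are jointly monic, so pullback-stability of $\varphi_X$ makes $j_f$ an $\mS$-embedding and $Sj_f$ a \lari. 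Your central computation is correct: naturality of $\varphi$ gives $\Psi\cdot S'L'f=SLf\cdot\varphi_X$, and the monad-morphism law plus $\varphi_X\cdot q'_f=q_f\cdot j_f$ give $\varphi_X\cdot\mu'_X\cdot S'q'_f=\mu_X\cdot Sq_f\cdot Sj_f\cdot\varphi_{K'f}$, so the displayed inequality does reduce to the counit of $SLf\dashv\mu_X\cdot Sq_f$ supplied by the Proposition (this is where lax idempotency of $\mS$ enters, correctly, since the Proposition is only available for lax idempotent monads --- and you rightly avoid invoking it for $\mS'$, which is not known to be lax idempotent, constructing the adjoint $G'$ directly instead). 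The ``moreover'' clause via $s=j_f\cdot s'$ is also fine.

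The genuinely under-justified stretch is the verification that $U'\dashv G'$, in two places. First, morphisms of $\mS'$-$\Emb$ must satisfy the compatibility $r_{L'f}\cdot S'k'=S'h\cdot r_g$, which you dismiss with ``fullness guarantees''; fullness alone does not give it. What does give it is that $(h,k')$ is a morphism of $(L',\Phi')$-coalgebras: if $K'(h,k')\cdot s'_g=s'_{L'f}\cdot k'$, then writing $r_g=\mu'\cdot S'(q'_g\cdot s'_g)$ as in the Lemma and using $q'_f\cdot K'(h,k')=S'h\cdot q'_g$ one gets the equality on the nose --- but this coalgebra-morphism identity still has to be checked for your $k'=K'(h,k)\cdot s'_g$, and you do not do so. Second, your uniqueness argument appeals to ``uniqueness of $\mS$-lifts'', but factorisations through $\Phi_f$ are unique only among morphisms of $\mS$-$\Emb$; so you would first have to show that the competing $j_f\cdot k''$ satisfies the $\mS$-side compatibility, another unproved chase. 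A cleaner patch bypasses $j_f$ here entirely: precomposing the required compatibility $\mu'_X\cdot S'q'_f\cdot S'k''=S'h\cdot r_g$ with $\eta'$, and using naturality of $\eta'$ together with the unit law $\mu'_X\cdot\eta'_{S'X}=1$, yields $q'_f\cdot k''=S'h\cdot r_g\cdot\eta'$; combined with $R'f\cdot k''=k$ and joint monicity of $(q'_f,R'f)$ this pins $k''$ down uniquely. With these two chases supplied, your proof is complete.
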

(Here by \emph{full morphism} in an \Ord-category we mean a morphism $f:X\to Y$ such that, for every $u,v:Z\to X$,
$f\cdot u\leq f\cdot v$ implies $u\leq v$.)

\section{$\TV$-categories}\label{sect3}
\subsection{The setting}\label{sub:3.1}
First we describe the setting where we will be working throughout the paper.

\noindent A. $\V$ is a \emph{commutative and unital quantale}, that is, a complete lattice equipped with a tensor product
$\otimes$, with unit $k\neq\bot$, and with right adjoint $\hom$. We denote by $\VRel$ the bicategory of
\emph{$\V$-relations}, having sets as objects, while morphisms $r:X\relto Y$ are $\V$-relations, i.e. maps $r:X\times
Y\to\V$; their composition is given by relational composition, that is, for $r:X\relto Y$ and $s:Y\relto Z$,
\[s\cdot r(x,z)=\bigvee_{y\in Y}r(x,y)\otimes s(y,z).\]
Every map $f:X\to Y$ \emph{is} a $\V$-relation $f:X\times Y\to\V$ with $f(x,y)=k$ if $f(x)=y$ and $f(x,y)=\bot$
elsewhere. This correspondence defines a bijective on objects and faithful pseudofunctor $\Set\to\VRel$. $\VRel$ is a
locally ordered and locally complete bicategory, with $r\leq s$ if $r(x,y)\leq s(x,y)$, for $r,s:X\relto Y$, $x\in X$,
$y\in Y$. It has an involution $(\;)^\circ:\VRel\to\VRel$ assigning to each $r:X\relto Y$ the $\V$-relation
$r^\circ:Y\relto X$ defined by $r^\circ(y,x)=r(x,y)$. For each $r:X\relto Y$ both left and right compositions with $r$
preserve suprema, and therefore we have the following adjunctions
\[\xymatrix{\VRel(Y,Z)\ar@{}[rr]|-{\bot}\ar@<1ex>[rr]^-{(\;)\cdot r}&&\VRel(X,Z)\ar@<1ex>[ll]^-{(\;)\homcompright
r}}\mbox{ and }\xymatrix{\VRel(Z,X)\ar@{}[rr]|-{\bot}\ar@<1ex>[rr]^-{r\cdot(\;)}&&\VRel(Z,Y)\ar@<1ex>[ll]^-{r\homcompleft
(\;)}}\]
so that, for every $s:Y\relto Z$, $s':X\relto Z$, $t:Z\relto X$, $t':Z\relto Y$,
\[s\cdot r\leq s'\iff s\leq s'\homcompright r\mbox{ and }r\cdot t\leq t'\iff t\leq r\homcompleft t'.\]

\noindent{B.} $\mT=(T,e,m)$ is a non-trivial $\Set$-monad that \emph{satisfies (BC)}; that is, $T$ preserves weak
pullbacks and every naturality square of $m$ is a weak pullback. We point out that, in particular, the monad $\mT$ is
\emph{taut} in the sense of Manes \cite{Ma02} (see \cite{CHJ14} for details).

\noindent{C.} $\xi:T\V\to\V$ is a $\mT$-algebra structure on $\V$ such that both $\otimes:\V\times\V\to\V$ and $k:1\to
V$, $\ast\mapsto k$, are $\mT$-algebra homomorphisms, that is, the following diagrams
\[\xymatrix{T1\ar[r]^{Tk}\ar[d]_{!}&T\V\ar[d]^\xi&&&T(\V\times\V)\ar[r]^-{T(\otimes)}\ar[d]_{<\xi\cdot T\pi_1,\xi\cdot
T\pi_2>}&T\V\ar[d]^\xi\\
1\ar[r]^k&\V&&&\V\times\V\ar[r]^-{\otimes}&\V}\]
are commutative, and, for all maps $f:X\to Y$, $\varphi:X\to\V$ and $\psi:Y\to\V$ with
$\displaystyle\psi(y)=\bigvee_{x\in f^{-1}(y)}\varphi(x)$ for every $y\in Y$, the following inequality holds
\[\xi\cdot T\psi(y)\leq\bigvee_{\fx\in Tf^{-1}(\fy)}\xi\cdot T\varphi(\fx),\]
for every $\fy\in TY$. (For alternative descriptions of the latter condition see \cite{H07}.)

\noindent{D.} Using $\xi$ we define, for each $\V$-relation $r:X\rel Y$, the $\V$-relation $T_\xi r:TX\rel TY$ as the
composite
\[\xymatrix{TX\times TY\ar[r]^-{T_\xi r}\ar[d]|-{\object@{|}}_{<T\pi_1,T\pi_2>^\circ}&\V\\
T(X\times Y)\ar[r]_-{Tr}&T\V\ar[u]_\xi}\]
that is, for each $\fx\in TX$, $\fy\in TY$,
\[T_\xi r(\fx,\fy)=\bigvee\{\xi(Tr(\fw))\,|\,\fw\in T(X\times Y), T\pi_1(\fw)=\fx, T\pi_2(\fw)=\fy\}.\]
This defines a pseudofunctor $T_\xi:\VRel\to\VRel$ that extends $T:\Set\to\Set$, so that $m:T_\xi T_\xi\to T_\xi$ is a
natural transformation while $e:\Id_{\VRel}\to T_\xi$ is an op-lax natural transformation (see \cite{H07} for details).

\subsection{$\TV$-categories}\label{sub:3.2}
Having fixed these data, a $\TV$-category is a pair $(X,a)$, where $X$ is a set and $a:TX\rel X$ is a $\V$-relation such
that
\[1_X\leq a\cdot e_X\mbox{ and }a\cdot Ta\leq a\cdot m_X.\]
Given $\TV$-categories $(X,a)$, $(Y,b)$, a $\TV$-functor $f:(X,a)\to(Y,b)$ is a map $f:X\to Y$ such that
\[f\cdot a\leq b\cdot Tf.\]
We denote the category of $\TV$-categories and $\TV$-functors by $\TVCat$. As defined in \cite[Section 12]{CT03},
$\TVCat$ is (pre)order-enriched by:
\[f\leq g\mbox{ if }g\leq b\cdot e_Y\cdot f,\]
for $f,g:(X,a)\to(Y,b)$. (This structure is in fact inherited from the order-enrichment of $\VRel$ as explained in
\ref{sub:3.5}.) Identifying an element $x$ of $X$ with the $\TV$-functor $E=(1,e_1^\circ)\to (X,a)$, $\ast\mapsto x$,
$(X,a)$ becomes (pre)ordered; $(X,a)$ is called \emph{separated}, or \emph{skeletal}, if, for $x,x'\in X$, $x\leq x'$ and
$x'\leq x$ implies $x=x'$. The category of separated $\TV$-categories and $\TV$-functors will be denoted by $\TVCat_0$.

\begin{uexas}
Let $\mT$ be the identity monad $\mId$ and $\xi:V\to V$ the identity map.
\begin{itemize}
\item When $\V=\two$, $(\mId,\two)$-\Cat\ is the category of (pre)ordered sets and monotone maps.
\item Let $\V=[0,\infty]_+$ be the complete half-real line $[0,\infty]$ ordered by the greater or equal relation, with
    $\otimes=+$ and $\hom$ the truncated minus, so that $\hom(u,v)=v\ominus u$, which is equal to $v-u$ if $v\geq u$
    and $0$ otherwise. As shown by Lawvere in \cite{Law73}, $(\mId,[0,\infty]_+)$-\Cat\ is the category $\Met$ of generalised
    metric spaces and non-expansive maps.
\end{itemize}
Let $\mT$ be the ultrafilter monad $\mU$ and $\xi:TV\to V$ be defined by $\xi(\fx)=\bigvee\{v\in \V\,|\,\fx\in T(\uparrow
v)\}$.
\begin{itemize}
\item When $\V=\two$ -- as shown by Barr in \cite{Ba} -- $(\mU,\two)$-\Cat\ is the category $\Top$ of topological spaces and
    continuous maps.
\item When $\V=[0,\infty]_+$ -- as shown in \cite{CH03} -- $(\mU,[0,\infty]_+)$-\Cat\ is the category of approach
    spaces and non-expansive maps \cite{Lo}.
\end{itemize}
\end{uexas}
\subsection{The dual of a $\TV$-category}
When $\mT$ is the identity monad, $\TVCat$ is the category $\VCat$ of $\V$-categories and $\V$-functors. In $\VCat$ there
is a natural notion of \emph{dual category}, inducing a functor $D:\VCat\to\VCat$, with $D(X,a)=(X,a^\circ)$. To build a
dual for a $\TV$-category we first note that the $\Set$-monad $\mT$ can be extended to $\VCat$, with $T(X,a)=(TX,Ta)$,
and make use of the following adjunction
\[\xymatrix{(\VCat)^\mT\ar@{}[rr]|{\bot}\ar@<1ex>[rr]^-N&&\TVCat\ar@<1ex>[ll]^-M}\]
where, for a $\V$-category $(X,a)$, a $\mT$-algebra structure $\alpha:T(X,a)\to(X,a)$ and a $\mT$-homomorphism $f$,
$N((X,a),\alpha)=(X,a\cdot\alpha)$ and $Nf=f$; and, for a $\TV$-category $(Y,b)$ and a $\TV$-functor $g$,
$M(Y,b)=((TY,Tb\cdot m_Y^\circ),m_Y)$ and $Mg=Tg$. The functor $D:\VCat\to\VCat$ lifts to a functor
$D:(\VCat)^\mT\to(\VCat)^\mT$, with $D((X,a),\alpha)=((X,a^\circ),\alpha)$, and we define the \emph{dual $(X,a)^\op$ of a
$\TV$-category} $(X,a)$ as
\[\xymatrix{(\VCat)^\mT\ar@`{(-20,-20),(-20,20)}^D\ar@{}[rr]|{\bot}\ar@<1ex>[rr]^-N&&\TVCat\ar@<1ex>[ll]^-M}\]
$NDM(X,a)=(TX, m_X\cdot (Ta)^\circ\cdot m_X)$; that is, denoting its structure by $a^\op$,
\[a^\op(\fX,\fy)=\bigvee_{\fY\,:\,m_X(\fY)=\fy}Ta(\fY,m_X(\fX)),\]
for $\fX\in T^2X$ and $\fy\in TX$ (see \cite{CCH}).

\subsection{$\V$ as a $\TV$-category}
As we have in $\V$ both a $\V$-categorical structure $\hom:\V\rel\V$ and a $\mT$-algebra structure $\xi:T\V\to\V$, which
is a $\V$-functor $\xi:(T\V,T\hom)\to (\V,\hom)$ due to our assumptions, $N((V,\hom),\xi)=(\V,\hom_\xi)$ is a
$\TV$-category; this structure has a crucial role in our study, as we will see in the next section.

\subsection{$\TV$-bimodules}\label{sub:3.5}
Given $\TV$-categories $(X,a)$ and $(Y,b)$, a \emph{$\TV$-bimodule} (or simply a \emph{bimodule}) $\psi:(X,a)\modto
(Y,b)$ is a $\V$-relation $\psi:TX\rel Y$ such that $\psi\circ a\leq\psi$ and $b\circ\psi\leq\psi$, where the composition
$s\circ r$ of two $\V$-relations $r:TX\rel Y$ and $s:TY\rel Z$ is given by the \emph{Kleisli convolution} (see
\cite{HT}), that is
\[s\circ r=s\cdot Tr\cdot m_X^\circ.\] Under our assumptions bimodules compose, with the $\TV$-categorical structures as
identities for this composition. We denote by $\TVMod$ the category of $\TV$-categories and $\TV$-bimodules. $\TVMod$ is
locally preordered by the preorder inherited from $\VRel$.

Every $\TV$-functor $f:(X,a)\to(Y,b)$ induces a pair of bimodules $f_*:(X,a)\modto(Y,b)$ and $f^*:(Y,b)\modto(X,a)$,
defined by $f_*=b\cdot Tf$ and $f^*=f^\circ\cdot b$; that is, $f_*(\fx,y)=b(Tf(\fx),y)$ and $f^*(\fy,x)=b(\fy,f(x))$, for
$\fx\in TX$, $\fy\in TY$, $x\in X$ and $y\in Y$. The Kleisli convolution becomes simpler when composing with these
bimodules: for any $\varphi:X\modto Z$ and $\psi:Z\modto X$, $f^*\circ\varphi=f^\circ\cdot\varphi$ and $\psi\circ
f_*=\psi\cdot Tf$. It is easy to check that $a\leq f^*\circ f_*$ and $f_*\circ f^*\leq b$, that is, $f_*\dashv f^*$. The
$\TV$-functor $f$ is said to be \emph{fully faithful} when $f^*\circ f_*=a$, or, equivalently,
$a(\fx,x)=b(Tf(\fx),f(x))$, for every $\fx\in TX, x\in X$. The local (pre)order on $\TVCat$ corresponds to the local
(pre)order on $\TVMod$: for $\TV$-functors $f,g:(X,a)\to(Y,b)$,
\[f\leq g\iff f^*\leq g^*\iff f_*\geq g_*.\]

\section{The presheaf monad and its submonads}

\subsection{The Yoneda Lemma}
The tensor product in $\V$ defines a tensor product in $\TVCat$, with $(X,a)\otimes(Y,b)=(X\times Y,c)$, where
$c(\fw,(x,y))=a(T\pi_1(\fw),x)\otimes b(T\pi_2(\fw),y)$, for $\fw\in T(X\times Y)$, $x\in X$, $y\in Y$. Its neutral
element is $E=(1,e_1^\circ)$. For each $\TV$-category $(X,a)$, the functor $X^\op\otimes(\;):\TVCat\to\TVCat$ has a right
adjoint $(\;)^{X^\op}:\TVCat\to\TVCat$.
\begin{upropC}[\cite{CH09}]\label{th:bimod}
For $\TV$-categories $(X,a), (Y,b)$ and a $\V$-relation $\psi:TX\rel Y$, the following conditions are equivalent:
\begin{enumerate}[label=(\em\roman*)]
\item $\psi:(X,a)\modto(Y,b)$ is a bimodule;
\item $\psi:(X,a)^\op\otimes (Y,b)\to(\V,\hom_\xi)$ is a $\TV$-functor.
\end{enumerate}
\end{upropC}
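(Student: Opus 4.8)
The plan is to prove the equivalence by translating both conditions into pointwise inequalities in $\V$ and matching them. First I would unwind (ii). Writing $c$ for the tensor structure on $(X,a)^\op\otimes(Y,b)$, which has underlying set $TX\times Y$ and, by the formula for $\otimes$ together with the explicit dual $a^\op$, satisfies $c(\fw,(\fx,y))=a^\op(T\pi_1\fw,\fx)\otimes b(T\pi_2\fw,y)$ for $\fw\in T(TX\times Y)$, the functor condition reads $\psi\cdot c\leq\hom_\xi\cdot T_\xi\psi$. Since $\psi$ is here a \emph{map} $TX\times Y\to\V$, its extension $T_\xi\psi$ is the graph of $T\psi\colon T(TX\times Y)\to T\V$, so that $\hom_\xi\cdot T_\xi\psi$ evaluated at $(\fw,w)$ equals $\hom(\xi(T\psi(\fw)),w)$; using the adjunction $\otimes\dashv\hom$ this rewrites (ii) as the single inequality
\[\xi(T\psi(\fw))\otimes a^\op(T\pi_1\fw,\fx)\otimes b(T\pi_2\fw,y)\leq\psi(\fx,y)\]
for all $\fw\in T(TX\times Y)$ and $(\fx,y)\in TX\times Y$. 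This is the $\TV$-analogue of the familiar fact in $\VCat$ that a $\V$-functor $X^\op\otimes Y\to\V$ is exactly a bimodule, the two tensor factors $a^\op$ and $b$ playing the roles of the two variables.

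The direction (i)$\Rightarrow$(ii) I expect to be routine. Given $\fw$, set $\fX=T\pi_1\fw$ and $\fy=T\pi_2\fw$; since $\xi(T\psi(\fw))$ is one of the terms whose supremum defines the relational extension $T_\xi\psi(\fX,\fy)$, the left-hand side above is bounded by $T_\xi\psi(\fX,\fy)\otimes a^\op(\fX,\fx)\otimes b(\fy,y)$. Now the left-module condition $b\circ\psi\leq\psi$, read through the Kleisli convolution $b\cdot T_\xi\psi\cdot m_X^\circ$, yields $T_\xi\psi(\fX,\fy)\otimes b(\fy,y)\leq\psi(m_X\fX,y)$; and the explicit formula $a^\op(\fX,\fx)=\bigvee_{\fY\,:\,m_X\fY=\fx}Ta(\fY,m_X\fX)$ together with the right-module condition $\psi\circ a\leq\psi$ yields $a^\op(\fX,\fx)\otimes\psi(m_X\fX,y)\leq\psi(\fx,y)$. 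Chaining these (using commutativity of $\otimes$) gives the displayed inequality. Note that this direction uses only that a single witness lies below a supremum, so the hypothesis (BC) on $\mT$ is not needed here.

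The hard part will be the converse (ii)$\Rightarrow$(i), where one must recover the two module inequalities from the single displayed inequality. The obstacle is precisely the reverse of the pleasant feature just exploited: both $b\circ\psi$ and $\psi\circ a$ are defined by suprema over $T^2X$ (through $m_X^\circ$) and, through the extension, over witnesses $\fw$ with prescribed projections, so one must feed \emph{arbitrary} such witnesses into the displayed inequality while simultaneously forcing the other tensor factor to be a unit (that is, $a^\op(\fX,m_X\fX)\geq k$ in order to extract $b\circ\psi\leq\psi$, and the corresponding reflexivity of $b$ to extract $\psi\circ a\leq\psi$). The reflexivity $1\leq a^\op\cdot e_{TX}$ of the dual only supplies a \emph{preferred} witness rather than the arbitrary $\fX$ at hand, so the passage from an arbitrary witness to a usable one is exactly where the Beck--Chevalley condition (BC) enters: preservation of weak pullbacks by $T$ and the fact that the naturality squares of $m$ are weak pullbacks guarantee that the projections $T(TX\times Y)\to T^2X\times TY$ are surjective enough to manufacture the witnesses realizing the reflexivity factors. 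I would carry this out by analysing the extension $T_\xi$ and the dual $a^\op$ explicitly, invoking (BC) precisely at the points where a witness with prescribed projections and unit reflexivity factor is required, and then reading off $b\circ\psi\leq\psi$ and $\psi\circ a\leq\psi$.
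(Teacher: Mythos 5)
The paper itself offers no proof of this proposition---it is imported verbatim from \cite{CH09}---so your proposal must be judged on its own merits. Your unwinding of (ii) is correct: the functor condition $\psi\cdot c\leq \hom_\xi\cdot T\psi$, tested at $v=\psi(\fx,y)$ and transposed through $\otimes\dashv\hom$, is exactly the single inequality $\xi(T\psi(\fw))\otimes a^\op(T\pi_1\fw,\fx)\otimes b(T\pi_2\fw,y)\leq\psi(\fx,y)$, and your proof of (i)$\Rightarrow$(ii) from it is complete and correct, including the observation that only ``one witness below a supremum'' is used. The genuine gap is that (ii)$\Rightarrow$(i) is never carried out: you describe an obstacle and a strategy, but prove nothing, so half of the equivalence is missing.

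Worse, the strategy you sketch misidentifies where the work lies: no Beck--Chevalley witness-manufacturing is needed in this direction, and a weak-pullback surjectivity argument of the kind you propose is simply not there to be found. For $b\circ\psi\leq\psi$, the elements $\fw$ with $T\pi_1\fw=\fX$, $T\pi_2\fw=\fy$ are \emph{given}, quantified in the supremum defining $T_\xi\psi(\fX,\fy)$; you apply your displayed inequality to each such $\fw$ with $\fx:=m_X\fX$, and the only point to check is $a^\op(\fX,m_X\fX)\geq k$. Your worry that reflexivity supplies only a ``preferred witness rather than the arbitrary $\fX$ at hand'' dissolves once you notice that, by the paper's formula $a^\op(\fX,\fy)=\bigvee_{\fY\,:\,m_X\fY=\fy}T_\xi a(\fY,m_X\fX)$, the value $a^\op(\fX,-)$ depends on $\fX$ only through $m_X\fX$; the canonical witness $\fY=Te_X(m_X\fX)$ then works, since $m_X\cdot Te_X=1$ and $T_\xi a\cdot Te_X=T_\xi(a\cdot e_X)\geq T_\xi 1_X=1_{TX}$, whence $T_\xi a(Te_X\fx,\fx)\geq k$. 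For $\psi\circ a\leq\psi$, given $\fY$ with $m_X\fY=\fx$ and $\fx'\in TX$, instantiate your inequality at $\fw=e_{TX\times Y}(\fx',y)$: naturality of $e$ and $\xi\cdot e_\V=1$ give $\xi(T\psi(\fw))=\psi(\fx',y)$; since $m_X e_{TX}\fx'=\fx'$, one has $a^\op(e_{TX}\fx',\fx)\geq T_\xi a(\fY,\fx')$; and $b(e_Yy,y)\geq k$ by reflexivity---together yielding $T_\xi a(\fY,\fx')\otimes\psi(\fx',y)\leq\psi(\fx,y)$ as required. Condition (BC) is used only through the standing framework (making $T_\xi$ a functor, the Kleisli convolution associative, the dual well defined), never as a device for producing witnesses with prescribed projections inside this proof; all needed witnesses come from the unit $e$ and the monad identities.
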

Since $a:(X,a)\modto(X,a)$ is a bimodule, this result tells us that $a:X^\op\otimes X\to\V$ is a $\TV$-functor, and
therefore, from the adjunction $X^\op\otimes (\;)\dashv (\;)^{X^\op}$, $a$ induces the \emph{Yoneda $\TV$-functor}
\[\xymatrix{(X,a)\ar[rr]^{\yoneda_X}&& \V^{X^\op}}.\]
The following result provides a Yoneda Lemma for $\TV$-categories.
\begin{uthmC}[\cite{CH09}]
Let $(X,a)$ be a $\TV$-category. For all $\psi\in\V^{X^\op}$ and all $\fx\in TX$,
\[\widehat{a}(T\yoneda_X(\fx),\psi)=\psi(\fx),\]
where $\widehat{a}$ denotes the $\TV$-categorical structure on $\V^{X^\op}$. In particular, $\yoneda_X$ is fully
faithful.
\end{uthmC}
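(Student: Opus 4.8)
The plan is to make the convergence structure $\ea$ on $\V^{X^\op}$ explicit and then to establish the two resulting inequalities separately. By Proposition~\ref{th:bimod} the $\V$-relation $a$ is a $\TV$-functor $X^\op\otimes X\to(\V,\hom_\xi)$, and $\yoneda_X$ is precisely its transpose under the adjunction $X^\op\otimes(\;)\dashv(\;)^{X^\op}$; the counit of this adjunction is an evaluation $\TV$-functor $\ev\colon X^\op\otimes\V^{X^\op}\to(\V,\hom_\xi)$ with $\ev\cdot(1_{X^\op}\otimes\yoneda_X)=a$, so that on points $\ev(x,\yoneda_X(y))=a(e_X(x),y)$. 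Since $\V^{X^\op}$ is the cotensor, its structure $\ea$ is the largest $\V$-relation making $\ev$ a $\TV$-functor; unwinding the tensor structure on $X^\op\otimes\V^{X^\op}$ and the definition $\hom_\xi(\fv,w)=\hom(\xi(\fv),w)$ gives, for $\fP\in T(\V^{X^\op})$ and $\psi\in\V^{X^\op}$,
\[
\ea(\fP,\psi)=\bigwedge_{\fx\in TX}\hom\Big(\;\bigvee_{\substack{\fw\in T(X\times\V^{X^\op})\\ T\pi_1(\fw)=\fx,\ T\pi_2(\fw)=\fP}}\!\!\xi\big(T\ev(\fw)\big),\ \psi(\fx)\Big).
\]
(When $\mT=\mId$ this collapses to $\bigwedge_{x}\hom(\fP(x),\psi(x))$, the familiar $\V$-enriched formula.) It then remains to evaluate the right-hand side at $\fP=T\yoneda_X(\fx_0)$ and to compare it with $\psi(\fx_0)$.

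For the inequality $\ea(T\yoneda_X(\fx_0),\psi)\le\psi(\fx_0)$ I would restrict the outer meet to $\fx=\fx_0$ and feed the inner supremum the canonical witness $\fw_0:=T\langle1_X,\yoneda_X\rangle(\fx_0)$, which indeed satisfies $T\pi_1(\fw_0)=\fx_0$ and $T\pi_2(\fw_0)=T\yoneda_X(\fx_0)$. Now $\ev\cdot\langle1_X,\yoneda_X\rangle$ is the map $x\mapsto a(e_X(x),x)$, which by reflexivity (the unit law $1_X\le a\cdot e_X$) is pointwise above $k$; condition~C ($\xi\cdot Tk=k\cdot{!}$) together with monotonicity of the extension then gives $\xi(T\ev(\fw_0))\ge k$. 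Consequently the inner supremum at $\fx_0$ is $\ge k$, whence $\hom(-,\psi(\fx_0))$ of it is $\le\hom(k,\psi(\fx_0))=\psi(\fx_0)$, and the meet is $\le\psi(\fx_0)$.

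The reverse inequality $\psi(\fx_0)\le\ea(T\yoneda_X(\fx_0),\psi)$ is where the presheaf axiom enters, and it suffices to show, for each $\fx$ and each witness $\fw$ over $(\fx,T\yoneda_X(\fx_0))$, that $\psi(\fx_0)\otimes\xi(T\ev(\fw))\le\psi(\fx)$. Because $\mT$ satisfies (BC), the square exhibiting $X\times X$ as the pullback of $\pi_2\colon X\times\V^{X^\op}\to\V^{X^\op}$ along $\yoneda_X$ is preserved as a weak pullback, so from $T\pi_2(\fw)=T\yoneda_X(\fx_0)$ I obtain $\mathfrak{W}\in T(X\times X)$ with $\fw=T(1_X\times\yoneda_X)(\mathfrak{W})$, $T\pi_1(\mathfrak{W})=\fx$ and $T\pi_2(\mathfrak{W})=\fx_0$. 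Using $\ev\cdot(1_X\times\yoneda_X)=a\cdot(e_X\times1_X)$ one then bounds $\xi(T\ev(\fw))\le T_\xi a(Te_X(\fx),\fx_0)$; and since $m_X\cdot Te_X=1_{TX}$, the bimodule condition $\psi\circ a=\psi\cdot T_\xi a\cdot m_X^\circ\le\psi$ specialises to $T_\xi a(Te_X(\fx),\fx_0)\otimes\psi(\fx_0)\le\psi(\fx)$. Taking suprema over witnesses (where $\otimes$ preserves suprema) and meets over $\fx$ delivers the claim.

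The main obstacle is precisely this second inequality: one must transport the $T$-image witness $\fw$ back along $T(1_X\times\yoneda_X)$, and that is the step forcing the use of (BC); the rest is careful bookkeeping of the quantale operations, the extension $T_\xi$, and the monad laws $\xi\cdot Tk=k\cdot{!}$ and $m_X\cdot Te_X=1_{TX}$. Finally, the ``in particular'' is immediate: specialising $\psi=\yoneda_X(y)$ gives $\ea(T\yoneda_X(\fx),\yoneda_X(y))=a(\fx,y)$, which is exactly the equality $\yoneda_X^*\circ(\yoneda_X)_*=a$ expressing that $\yoneda_X$ is fully faithful.
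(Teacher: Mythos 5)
Your proof has a genuine structural gap: you take the underlying set of $X^\op$ to be $X$, when in the $\TV$-setting of this paper the dual is $(X,a)^\op=NDM(X,a)=(TX,a^\op)$, whose underlying set is $TX$. Consequently presheaves $\psi\in\V^{X^\op}$ are maps $TX\to\V$ (this is why $\psi(\fx)$ with $\fx\in TX$ makes sense in the statement), the evaluation is $\ev\colon TX\times PX\to\V$, $(\fy,\psi)\mapsto\psi(\fy)$, witnesses for the tensor structure on $X^\op\otimes\V^{X^\op}$ live in $T(TX\times PX)$ with first projection in $T^2X$, and the cotensor structure carries an extra factor $a^\op(T\pi_1\fw,\fx)$ — hence involves $m_X$ and $T_\xi a$ — which your formula omits entirely. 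Your displayed expression for $\ea(\fP,\psi)$, with witnesses in $T(X\times\V^{X^\op})$ and no $a^\op$ term, is the $\mT=\mId$ formula transplanted to the general case; already for $\mT=\mU$, $\V=\two$ (the case $\Top$) the dual is a structure on the set of ultrafilters and your formula does not describe $\ea$. The telltale symptom is your equation $\ev(x,\yoneda_X(y))=a(e_X(x),y)$, which silently inserts $e_X$ to reconcile mismatched domains: the correct identity is $\ev(\fy,\yoneda_X(y))=a(\fy,y)$ for $\fy\in TX$. Since both of your inequality computations are carried out against this incorrect candidate structure, the argument as written proves the Yoneda Lemma only for $\V$-categories, not for $\TV$-categories. (For calibration: the paper itself gives no proof — the theorem is quoted from \cite{CH09} — so the benchmark is the proof there, which works with the genuine structure on $\V^{X^\op}$.)

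That said, your toolkit is exactly the right one, and the proof can be repaired along your lines. For the inequality $\ea(T\yoneda_X(\fx_0),\psi)\le\psi(\fx_0)$, replace $\langle 1_X,\yoneda_X\rangle$ by $\langle e_X,\yoneda_X\rangle\colon X\to TX\times PX$, so the canonical witness is $T\langle e_X,\yoneda_X\rangle(\fx_0)$, and bound the additional factor by $a^\op(Te_X(\fx_0),\fx_0)\geq k$, using $m_X\cdot Te_X=1_{TX}$ together with reflexivity and $\xi\cdot Tk=k\cdot{!}$ (note that the monotonicity of $g\mapsto\xi\cdot Tg$, which you invoke in passing, itself needs justification from the assumptions in C). For the reverse inequality, lift a witness $\fw\in T(TX\times PX)$ with $T\pi_2(\fw)=T\yoneda_X(\fx_0)$ along $T(1_{TX}\times\yoneda_X)$ over the pullback $TX\times X$ — your (BC) idea, correctly retyped — to get $\xi(T\ev(\fw))\le T_\xi a(\fX,\fx_0)$ where $\fX=T\pi_1(\fw)\in T^2X$; but then, because of the extra factor $a^\op(\fX,\fx)=\bigvee_{m_X(\fY)=\fx}T_\xi a(\fY,m_X(\fX))$, you must apply the bimodule condition $\psi\circ a=\psi\cdot T_\xi a\cdot m_X^\circ\le\psi$ \emph{twice}: once as $T_\xi a(\fX,\fx_0)\otimes\psi(\fx_0)\le\psi(m_X(\fX))$ and once as $T_\xi a(\fY,m_X(\fX))\otimes\psi(m_X(\fX))\le\psi(m_X(\fY))=\psi(\fx)$, rather than the single application via $Te_X$ that suffices in your simplified setting. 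With these corrections the argument becomes essentially the proof of \cite{CH09}; your treatment of the ``in particular'' claim (specialising $\psi=\yoneda_X(y)$ to get $\ea(T\yoneda_X(\fx),\yoneda_X(y))=a(\fx,y)$, i.e.\ full faithfulness) is correct as stated.
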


\subsection{The presheaf monad}
In order to work in an $\Ord$-enriched category, from now on we restrict ourselves to $(\mT,\V)$-$\Cat_0$. We remark that
the results of the previous subsection remain valid when we replace $(\mT,\V)$-\Cat\ by $(\mT,\V)$-$\Cat_0$.
Denoting $\V^{X^\op}$ by $PX$, we point out that, via Theorem \ref{th:bimod},
\[PX=\{\varphi:(X,a)\modto E\,|\,\varphi \mbox{ bimodule}\}.\]
Moreover, the Yoneda $\TV$-functor $\yoneda_X$ turns out to assign to each $x\in X$, that is, to each
$\TV$-functor $x:E\to X$, the bimodule $x^*:X\modto E$. Each $\TV$-functor $f:(X,a)\to(Y,b)$ induces a $\TV$-functor
$Pf:PX\to PY$, assigning to $\varphi:X\modto E$ the bimodule $\varphi\circ f^*:Y\modto E$, that is $Pf=(\;)\circ f^*$.
This defines an endofunctor $P$ on $\TVCat$. From the adjunction $f_*\dashv f^*$, for every $\TV$-functor
$f:(X,a)\to(Y,b)$ one gets a right adjoint to $Pf$, $P^*f=(\;)\circ f_*:PY\to PX$. In particular, $P\yoneda_X:PX\to PPX$
has a right adjoint $\yonmult_X:PPX\to PX$, which, together with $P$ and $\yoneda$, defines a lax idempotent monad, the
\emph{presheaf monad}. Next we show that this monad is simple. In order to do that we use Proposition \ref{sub:2.5}.

\begin{uthm}
The presheaf monad $\mP$ on $\TVCat$ is simple.
\end{uthm}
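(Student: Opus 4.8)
The plan is to verify the criterion of the cited Proposition in Section~\ref{sub:2.5}: since the presheaf monad $\mP=(P,\yoneda,\yonmult)$ is already known to be lax idempotent, it suffices to exhibit, for every $\TV$-functor $f:(X,a)\to(Y,b)$, an adjunction
\[
PLf\dashv \yonmult_X\cdot P q_f,
\]
where $Kf$, $Lf$, $Rf$, $q_f$ are as in the simple-monad construction, here instantiated with $\mS=\mP$. First I would unwind what these data become concretely. The comma-object $Kf=Pf\downarrow\yoneda_Y$ consists of pairs built from a presheaf on $X$ and an element of $Y$ compatible under $Pf$ and $\yoneda_Y$; using the description $PX=\{\varphi:(X,a)\modto E\}$ and the fact that $\yoneda_Y$ sends $y$ to $y^*$, I expect $Kf$ to be identified with a $\TV$-category of pairs $(\varphi,y)$ with $\varphi\in PX$ and $y\in Y$ satisfying $\varphi\circ f^*\le y^*$, together with its inherited hom. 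The projection $q_f\colon Kf\to PX$ returns $\varphi$, and $Rf\colon Kf\to Y$ returns $y$; the factorisation morphism $Lf\colon X\to Kf$ sends $x$ to the pair $(x^*, f(x))$, since $\yoneda_X(x)=x^*$ and $Rf\cdot Lf=f$.

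Next I would compute the two composite functors whose adjunction is required. The right-hand functor $\yonmult_X\cdot Pq_f\colon P(Kf)\to PX$ is, by the formulas $Pq_f=(\;)\circ q_f^*$ and $\yonmult_X$ right adjoint to $P\yoneda_X$, a concrete reindexing of presheaves on $Kf$ down to presheaves on $X$. The left-hand functor $PLf\colon PX\to P(Kf)$ is $(\;)\circ (Lf)^*$. The goal is the adjunction $PLf\dashv \yonmult_X\cdot Pq_f$, which in the locally ordered setting means producing unit and counit inequalities, i.e. $1\le (\yonmult_X\cdot Pq_f)\cdot PLf$ and $PLf\cdot(\yonmult_X\cdot Pq_f)\le 1$. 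Because everything lives in bimodule language, I would translate both composites into Kleisli-convolution expressions using $f^*\circ(\;)=f^\circ\cdot(\;)$ and $(\;)\circ f_*=(\;)\cdot Tf$, so that the two required inequalities reduce to inequalities between $\V$-relations that can be checked by the adjunction $f_*\dashv f^*$ together with the bimodule conditions $\psi\circ a\le\psi$, $b\circ\psi\le\psi$ and the Yoneda Lemma identity $\eH{a}(T\yoneda_X(\fx),\psi)=\psi(\fx)$.

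The main obstacle I anticipate is not the formal bookkeeping but pinning down $Kf$, $q_f$ and $\yonmult_X$ explicitly enough that the adjunction inequalities become transparent. Concretely, the delicate point is the counit $PLf\cdot\yonmult_X\cdot Pq_f\le 1$ on $P(Kf)$: one must show that restricting a presheaf on $Kf$ along $q_f$ (to $PX$), multiplying down via $\yonmult_X$, and then pushing forward along $Lf$ recovers something below the original presheaf. This is where the comma-object inequality $Pf\cdot q_f\le\yoneda_Y\cdot Rf$ defining $Kf$, the relation $q_f\cdot Lf=\yoneda_X$, and lax idempotency (giving $P\yoneda\dashv\yonmult$ with $\yonmult\cdot P\yoneda=1$) must be combined. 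Once the concrete form of $Kf$ is fixed, I expect each inequality to follow from a short Kleisli-convolution computation, after which the conclusion that $\mP$ is simple is immediate from the cited Proposition, and the identification of the left class as $\mP$-embeddings follows from the Theorem on simple lax idempotent monads already established.
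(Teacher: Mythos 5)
Your proposal takes essentially the same route as the paper: the paper likewise invokes the simplicity criterion by establishing $PLf\dashv\yonmult_X\cdot Pq_f$, describes the comma object concretely as $Kf=\{(\varphi,y)\in PX\times Y\mid Pf(\varphi)\leq y^*\}$ with $Lf(x)=(x^*,f(x))$ exactly as you do, and deploys the Yoneda Lemma together with the comma-object inequality at precisely the point you flag as delicate. The only tactical difference is that instead of checking the unit and counit inequalities directly, the paper notes that $PLf$ already has the known right adjoint $P^*Lf=(\;)\circ(Lf)_*$ while $\yonmult_X\cdot Pq_f=(\;)\circ q_f^*\circ(\yoneda_X)_*$, so the whole adjunction collapses to the single bimodule equality $(Lf)_*=q_f^*\circ(\yoneda_X)_*$, verified pointwise by showing $b(Tf(\fx),y)\geq\varphi(\fx)$ for $(\varphi,y)\in Kf$ --- a computation your direct unit/counit check would in effect reproduce.
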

\begin{proof}
We need to show that, for any $\TV$-functor $f:(X,a)\to(Y,b)$, in the diagram below $PLf\dashv \yonmult_X\cdot Pq_f$.
\[\xymatrix{X\ar[rrr]^{\yoneda_X}\ar[ddd]_f\ar[rd]^{Lf}&&&PX\ar[ddd]^(0.6){Pf}\ar[rd]^{PLf}\ar@<1ex>[rrr]^{P\yoneda_{X}}&&
&PPX\ar@{}[lll]|{\bot}\ar@<1ex>[lll]^{\yonmult_X}\\
&Kf\ar@{}[rrdd]|{\geq}\ar[ldd]^{Rf}\ar[rru]_{q_f}\ar[rrr]_(0.4){\yoneda_{Kf}}&&&PKf\ar[ldd]^{PRf}\ar[rru]_{Pq_f}\\
\\
Y\ar[rrr]_{\yoneda_Y}&&&PY}\]
First we recall that the comma object $(Kf,\tilde{a})=Pf\downarrow \yoneda_Y$ is given by $Kf=\{(\varphi,y)\in PX\times
Y\,|\,Pf(\varphi)\leq y^*\}$, and $\tilde{a}(\fw,(\varphi,y))=\widehat{a}(Tq_f(\fw),\varphi)\wedge b(TRf(\fw),y)$, where
$\widehat{a}$ is the structure on $PX$.
On one hand, as we observed before, $PLf$ has as right adjoint the $\TV$-functor $P^*Lf=(\;)\circ(Lf)_*$. On the other
hand, $\yonmult_X\cdot Pq_f=P^*\yoneda_X\cdot Pq_f=(\;)\circ q_f^*\circ(\yoneda_X)_*$. Next we will show that
$(Lf)_*=q_f^*\circ (\yoneda_X)_*:X\modto Kf$, which concludes the proof.
For each $\fx\in TX$ and $(\varphi,y)\in Kf$,
\[(q_f^*\circ(\yoneda_X)_*)(\fx,(\varphi,y))=\widehat{a}(T\yoneda_X(\fx),\varphi)=\varphi(\fx),\]
while
\[(Lf)_*(\fx,(\varphi,y))=\tilde{a}(TLf(\fx),(\varphi,y))=\widehat{a}(T\yoneda_X(\fx),\varphi)\wedge b(Tf(\fx),y)=\varphi(\fx)\wedge b(Tf(\fx),y).\]
To show that $b(Tf(\fx),y)\geq\varphi(\fx)$, so that $(Lf)_*(\fx,(\varphi,y))=\varphi(\fx)$, note that
\[\begin{array}{rcll}
b(Tf(\fx),y)&=&\widehat{b}(T\yoneda_X(Tf(\fx)),y^*)&\mbox{(by the Yoneda Lemma)}\\
&\geq&\widehat{b}(T\yoneda_X(Tf(\fx)),Pf(\varphi))&\mbox{(by definition of $Kf$)}\\
&=&\widehat{b}(TPf\cdot T\yoneda_X(\fx),Pf(\varphi))\\
&\geq&\widehat{a}(T\yoneda(\fx),\varphi)&\mbox{(because $Pf$ is a $\TV$-functor)}.\hspace{1.96cm}\qEd
\end{array}\]
\def\popQED{}
\end{proof}

\begin{uprop}\label{prop:fff}
\begin{enumerate}[label=(\em\arabic*)]
\item A $\TV$-functor is a $P$-embedding if, and only if, it is fully faithful.
\item Fully faithful $\TV$-functors are pullback stable.
\end{enumerate}
\end{uprop}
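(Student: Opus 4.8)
The plan for (1) is to reduce the assertion to the very definition of full faithfulness. Recall that every $Pf$ carries a canonical right adjoint $P^*f=(\;)\circ f_*$; since we work in $\TVCat_0$, whose hom-orders are antisymmetric, right adjoints are unique, so $f$ is a $P$-embedding (that is, $Pf$ is a \lari) exactly when $P^*f\cdot Pf=1_{PX}$. By associativity of Kleisli convolution, $(P^*f\cdot Pf)(\varphi)=\varphi\circ(f^*\circ f_*)$, and $f$ is fully faithful precisely when $f^*\circ f_*=a$. Thus (1) amounts to the equivalence: $\varphi\circ(f^*\circ f_*)=\varphi$ for every $\varphi\in PX$ if and only if $f^*\circ f_*=a$.

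The implication from right to left is immediate, since $a$ is the identity bimodule on $(X,a)$. For the converse I would exploit that $f_*\dashv f^*$ already yields $a\leq f^*\circ f_*$, so only $f^*\circ f_*\leq a$ is at stake, and I would test the hypothesis on the representable presheaves $\yoneda_X(x)=x^*$. Here the computation is short: by associativity $x^*\circ(f^*\circ f_*)=(x^*\circ f^*)\circ f_*$; the inner factor is $x^*\circ f^*=Pf(x^*)=(f(x))^*$ by naturality of $\yoneda$; and $(f(x))^*\circ f_*=(f(x))^*\cdot Tf$ by the simplification of Kleisli convolution against $f_*$, whose value at $\fx\in TX$ is $b(Tf(\fx),f(x))$. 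The assumption $x^*\circ(f^*\circ f_*)=x^*$ therefore reads $b(Tf(\fx),f(x))=a(\fx,x)$ for all $\fx$ and $x$, which is exactly full faithfulness of $f$.

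For (2) I would use that $\TVCat$ is topological over $\Set$, so the pullback of $f\colon(X,a)\to(Y,b)$ along any $g\colon(Z,c)\to(Y,b)$ has underlying set $X\times_Y Z$ equipped with the initial structure for the two projections $p,q$, namely $d(\fw,(x,z))=a(Tp(\fw),x)\wedge c(Tq(\fw),z)$. Pullback stability means the projection $q$ is fully faithful, i.e. $d(\fw,(x,z))=c(Tq(\fw),z)$; as $d\leq c(Tq(-),-)$ holds by construction, it remains to prove $c(Tq(\fw),z)\leq a(Tp(\fw),x)$. Full faithfulness of $f$ rewrites the right-hand side as $b(T(fp)(\fw),f(x))$; the commuting pullback square gives $fp=gq$ and $f(x)=g(z)$, turning it into $b(Tg(Tq(\fw)),g(z))$; and the desired inequality $c(Tq(\fw),z)\leq b(Tg(Tq(\fw)),g(z))$ is precisely the condition that $g$ is a $\TV$-functor.

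The main obstacle is the forward direction of (1): one must know that the \lari{} right adjoint coincides with the canonical $P^*f$ — which is what forces the use of the separated setting $\TVCat_0$ — and then extract the pointwise equality $a(\fx,x)=b(Tf(\fx),f(x))$ from a single identity tested only on representables, for which the Yoneda Lemma and the two simplification formulas for composition with $f_*$ and $f^*$ do all the work. Part (2), by contrast, is a routine verification once the initial-structure description of pullbacks is recorded.
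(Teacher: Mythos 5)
Your proposal is correct. For part (1) it is essentially the paper's own argument: both proofs reduce ``$Pf$ is a \lari'' to the single equation $P^*f\cdot Pf=1_{PX}$, prove the easy direction by $\varphi\circ f^*\circ f_*=\varphi\circ a=\varphi$, and obtain the converse by testing on representables, where $x^*\circ f^*\circ f_*$ evaluates to $b(Tf(-),f(x))$ -- your chain through $Pf(x^*)=(f(x))^*$ and $\psi\circ f_*=\psi\cdot Tf$ is the same computation the paper performs in one line. (Your explicit remark that antisymmetry of the hom-orders in $\TVCat_0$ forces the \lari{} right adjoint to coincide with the canonical $P^*f$ is a point the paper leaves implicit; it is a worthwhile clarification.) For part (2), however, you take a genuinely different route. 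The paper's proof is a one-line appeal to generalities: since fully faithful $\TV$-functors are exactly the initial morphisms for the forgetful functor to $\Set$, the pair (bijective, fully faithful) is an orthogonal factorisation system, as in any topological category, and right classes of orthogonal factorisation systems are always pullback-stable. You instead compute the pullback concretely as the initial lift of the $\Set$-pullback, $d(\fw,(x,z))=a(Tp(\fw),x)\wedge c(Tq(\fw),z)$, and verify full faithfulness of the projection $q$ directly via $fp=gq$, $f(x)=g(z)$ and functoriality of $g$; this verification is correct. What each approach buys: the paper's argument is shorter, works already in $\TVCat$ without separatedness, and needs no explicit description of limits; yours is self-contained and shows exactly where full faithfulness of $f$ and functoriality of $g$ enter, but it rests on the explicit formula for initial structures in $\TVCat$, which is not proved in this paper and should be cited (it is the topologicity result of \cite{CH03}) rather than asserted.
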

\begin{proof} \leavevmode
  \begin{enumerate}
\item If $f:(X,a)\to(Y,b)$ is a $\TV$-functor, then $Pf$ has a right adjoint, $P^*f$. It remains to show that $P^*f\cdot
Pf=1_X$ when $f$ is fully faithful; this means $f^*\cdot f_*=a$, and so, for any bimodule $\varphi:X\modto E$,
\[P^*f\cdot Pf(\varphi)=\varphi\circ f^*\circ f_*=\varphi\circ a=\varphi.\]
Conversely, if $P^*f\cdot Pf=1_{PX}$, then, for any $x\in X$,
\[a(-,x)=x^*=P^*f\cdot Pf(x^*)=x^*\circ f^*\circ f_*=f^\circ\cdot x^\circ\cdot b\cdot Tf=b(Tf(-),f(x)),\]
that is, $f$ is fully faithful.

\item As in any topological category, (bijective, fully faithful $\TV$-functors) is an orthogonal factorisation system in
$\TV$-\Cat, and therefore fully faithful $\TV$-functors are pullback-stable.
\qedhere
\end{enumerate}
\end{proof}

\subsection{Presheaf submonads}\label{sub:4.3}
Let $\Phi$ be a class of $\TV$-bimodules satisfying the conditions:
\begin{enumerate}[leftmargin=8mm]
\item[(S1)] $\Phi$ is closed under composition.
\item[(S2)] For every $\TV$-functor $f$, $f^*\in\Phi$.
\item[(S3)] For every $\TV$-bimodule $\psi:X\modto Y$, $\psi\in\Phi$ provided that $y^*\circ\psi\in\Phi$ for every
    $y\in Y\!$.
\end{enumerate}
We call such a class \emph{saturated}. There is a largest saturated class, of all $\TV$-bimodules, and a smallest one,
$\{f^*\,|\,f \mbox{ is a $\TV$-functor}\}$. In the last section we will present other saturated classes.

For each $\TV$-category $(X,a)$, we define
\[\Phi X=\{\varphi:X\modto E\,|\,\varphi\in\Phi\}\subseteq PX,\]
equipped with the structure $\widehat{a}$ inherited from $PX$, and, to each $\TV$-functor $f:(X,a)\to(Y,b)$ we assign
\[\Phi f:\Phi X\to\Phi Y, \mbox{ with }\Phi f(\varphi)=\varphi\circ f^*.\]
Since $x^*\in\Phi$ for every $x\in X$, $\yoneda_X$ corestricts to $\Phi X$,
\[\xymatrix{X\ar[rr]^{\yoneda_X^\Phi}&&\Phi X}.\]
Moreover, condition (S3) guarantees that $\yonmult_X=P^*\yoneda_X$ (co)restricts to $\yonmult_X^\Phi:\Phi\Phi X\to\Phi
X$: by the Yoneda Lemma, for all $\varphi\in \Phi X$, $\varphi^*\circ P^*\yoneda_X=\varphi\in\Phi$. So, $(\Phi,
\yoneda^\Phi,\yonmult^\Phi)$ is a submonad of $P$.

\begin{uthm}\label{th:submon}
If $\Phi$ is a saturated class of bimodules, then the monad $(\Phi,\yoneda^\Phi,\yonmult^\Phi)$ is lax idempotent and
simple, and so it defines a lax orthogonal factorisation system.
\end{uthm}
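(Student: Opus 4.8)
The plan is to invoke the machinery already set up for submonads of simple lax idempotent monads. Specifically, the immediately preceding theorem (cited from \cite{CLF17}) states that if $\varphi\colon\mS'\to\mS$ is a monad morphism whose components are pullback-stable $\mS$-embeddings, and if $\mS$-embeddings are full, then $\mS'$ is simple whenever $\mS$ is lax idempotent and simple. We have just proved that the presheaf monad $\mP$ is simple, and it is lax idempotent by \cite{H11}. So the strategy is to verify that the inclusion $(\Phi,\yoneda^\Phi,\yonmult^\Phi)\to\mP$ satisfies the hypotheses of that theorem, and then lax idempotency of $\Phi$ follows from the general fact (recorded in the cited Theorem of \cite{CLF17}) that simplicity plus the \textsc{awfs} structure forces $\mR$, and hence the comonad side, to be lax idempotent; alternatively, lax idempotency of the submonad can be checked directly since $\Phi X$ carries the structure inherited from $PX$ and $\yoneda^\Phi$, $\yonmult^\Phi$ are (co)restrictions of $\yoneda$, $\yonmult$.

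First I would confirm that $\Phi\to P$ is a monad morphism: its component at $(X,a)$ is the inclusion $\Phi X\hookrightarrow PX$, and the fact that $\yoneda^\Phi$ and $\yonmult^\Phi$ are restrictions of $\yoneda$ and $\yonmult$ (established in the paragraph preceding the statement using (S3)) says precisely that these inclusions are compatible with the units and multiplications. Next I would show that each inclusion $\Phi X\hookrightarrow PX$ is a $P$-embedding, equivalently (by Proposition \ref{prop:fff}(1)) fully faithful; this is immediate because $\Phi X$ carries the structure $\widehat{a}$ \emph{inherited} from $PX$, so the inclusion is a full subcategory inclusion and hence fully faithful. By Proposition \ref{prop:fff}(2) fully faithful $\TV$-functors are pullback stable, which gives both the pullback-stability of the components and the fullness of $\mP$-embeddings demanded by the theorem from \cite{CLF17}. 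With all hypotheses in place, that theorem yields that $(\Phi,\yoneda^\Phi,\yonmult^\Phi)$ is simple and that every $\Phi$-embedding is a $P$-embedding.

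It then remains to record that $(\Phi,\yoneda^\Phi,\yonmult^\Phi)$ is lax idempotent, so that by the Theorem of Section 2 (the one asserting that a lax idempotent simple monad gives a \textsc{lofs}) it defines a lax orthogonal factorisation system. Lax idempotency is inherited: writing $S=\Phi$ with unit $\yoneda^\Phi$ and multiplication $\yonmult^\Phi$, the adjunction $P\yoneda_X\dashv\yonmult_X$ restricts along the fully faithful inclusions to $\Phi\yoneda^\Phi_X\dashv\yonmult^\Phi_X$, giving condition (ii) of the Kock--Z\"oberlein definition; fully faithfulness of the inclusions guarantees that the counit/unit inequalities transport without change. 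Finally, applying the Theorem on simple monads to $\Phi$ produces the \textsc{lofs} $(\mL,\mR)$ whose left class is exactly the class of $\Phi$-embeddings.

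The main obstacle I anticipate is purely bookkeeping rather than conceptual: one must be careful that the \emph{inherited} structure $\widehat{a}$ on $\Phi X$ really does make the inclusion fully faithful in the $\TV$-enriched sense, i.e.\ that $\widehat{a}$ on $\Phi X$ agrees with the restriction of $\widehat{a}$ on $PX$ on the nose (not merely up to the local preorder), and that the (co)restrictions $\yoneda^\Phi$, $\yonmult^\Phi$ land in $\Phi$ — the latter being exactly where condition (S3) is used, as noted before the statement. Provided these restrictions are well defined, the verification of the hypotheses of the cited submonad theorem is straightforward, and the conclusion follows formally.
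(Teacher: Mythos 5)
Your overall route is exactly the paper's: you verify the hypotheses of the cited submonad theorem from \cite{CLF17} (Subsection \ref{sub:2.6}) for the inclusion $(\Phi,\yoneda^\Phi,\yonmult^\Phi)\to\mP$, using that the components $\Phi X\to PX$ are fully faithful, hence $P$-embeddings by Proposition \ref{prop:fff}(1), and pullback-stable by Proposition \ref{prop:fff}(2). Your extra verifications --- that $\Phi\to P$ is genuinely a monad morphism (via (S3)), and the direct restriction argument showing $\Phi\yoneda^\Phi_X\dashv\yonmult^\Phi_X$ so that lax idempotency is inherited along the order-reflecting full inclusions --- are correct and in fact make explicit details that the paper's one-line proof leaves implicit.

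There is, however, one genuine slip: you assert that Proposition \ref{prop:fff}(2) ``gives both the pullback-stability of the components and the fullness of $\mP$-embeddings''. Pullback stability does not imply fullness; these are two \emph{independent} hypotheses of the theorem from \cite{CLF17}, and \emph{full} here means the $\Ord$-enriched condition stated right after it: $f\cdot u\leq f\cdot v$ implies $u\leq v$. The paper simply asserts that fully faithful $\TV$-functors are full; the missing (easy, but necessary) check is: if $f\colon(X,a)\to(Y,b)$ is fully faithful and $f\cdot u\leq f\cdot v$ for $u,v\colon(Z,c)\to(X,a)$, then for every $z\in Z$ one has $k\leq b(e_Y(fu(z)),fv(z))=b(Tf(e_X(u(z))),f(v(z)))=a(e_X(u(z)),v(z))$, using naturality of $e$ and fully faithfulness of $f$, which says exactly $u\leq v$. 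With that one-line verification inserted in place of the faulty appeal to pullback stability, your proof is complete and coincides with the paper's.
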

\begin{proof}
Since fully faithful $(\mT,\V)$-functors are pullback-stable and full, and the inclusion $\Phi X\to PX$ is clearly fully
faithful, this result follows directly from Theorem \ref{sub:2.6}.
\end{proof}

\section{Examples: The induced \textsc{lofs}s}

\subsection{General description}
Now let us fix a saturated class $\Phi$ of $\TV$-bimodules as in \ref{sub:4.3}. The presheaf submonad $\Phi$ induces a
\textsc{lofs} $(\mL^\Phi,\mR^\Phi)$, and consequently a \textsc{wfs} $(\cL^\Phi,\cR^\Phi)$ where $\cL^\Phi$ is the class
of $\Phi$-embeddings.

Following \cite{CH09}, we say that a $\TV$-functor $f$ is \emph{$\Phi$-dense} if $f_*\in\Phi$.

\begin{ulemC}[\cite{CH09}]
For a $\TV$-functor $h$, the following conditions are equivalent:
\begin{enumerate}[label=(\em\roman*)]
\item $h$ is $\Phi$-dense;
\item $\Phi h$ is a left adjoint;
\item $\Phi h$ is $\Phi$-dense.
\end{enumerate}
\end{ulemC}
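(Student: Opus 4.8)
The plan is to establish the cycle of implications (i)$\Rightarrow$(ii)$\Rightarrow$(iii)$\Rightarrow$(i). Two background facts will be used repeatedly. First, for $\TV$-functors $g\colon A\to B$ and $r\colon B\to A$ one has $g\dashv r$ if and only if $g_*=r^*$; this follows from uniqueness of adjoints in the locally ordered bicategory $\TVMod$, since $g_*\dashv g^*$ always holds and the inequalities witnessing $g\dashv r$ transport under $(\;)^*$ to exhibit $r^*$ as a left adjoint of $g^*$. Second, the corestricted Yoneda $\TV$-functor $\yoneda_X^\Phi\colon X\to\Phi X$ is $\Phi$-dense: by the Yoneda Lemma $(\yoneda_X^\Phi)_*(\fx,\varphi)=\widehat{a}(T\yoneda_X(\fx),\varphi)=\varphi(\fx)$, so $\varphi^*\circ(\yoneda_X^\Phi)_*=\varphi$ for every $\varphi\in\Phi X$, and since $\varphi\in\Phi$ this forces $(\yoneda_X^\Phi)_*\in\Phi$ by (S3).

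For (i)$\Rightarrow$(ii), I would assume $h\colon(X,a)\to(Y,b)$ is $\Phi$-dense, i.e.\ $h_*\in\Phi$. Since the right adjoint of $Ph$ is $P^*h=(\;)\circ h_*$ and $\Phi$ is closed under composition (S1), the functor $P^*h\colon PY\to PX$ restricts to a $\TV$-functor $\Phi Y\to\Phi X$. Because $\Phi h$ is the restriction of $Ph$ and the inclusions $\Phi X\hookrightarrow PX$, $\Phi Y\hookrightarrow PY$ are fully faithful, the unit and counit inequalities of $Ph\dashv P^*h$ transport to the subcategories (fully faithful $\TV$-functors reflect the order), so $\Phi h$ acquires this restriction as a right adjoint and is thus a left adjoint. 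For (ii)$\Rightarrow$(iii), suppose $\Phi h\dashv r$ for some $\TV$-functor $r\colon\Phi Y\to\Phi X$. By the first background fact $(\Phi h)_*=r^*$, and $r^*\in\Phi$ by (S2); hence $(\Phi h)_*\in\Phi$, that is, $\Phi h$ is $\Phi$-dense.

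For (iii)$\Rightarrow$(i), I would start from naturality of the unit of the monad $\Phi$, namely $\Phi h\cdot\yoneda_X^\Phi=\yoneda_Y^\Phi\cdot h$. Applying $(\;)_*$ gives $(\Phi h)_*\circ(\yoneda_X^\Phi)_*=(\yoneda_Y^\Phi)_*\circ h_*$, and composing on the left with $(\yoneda_Y^\Phi)^*$, while using that $\yoneda_Y^\Phi$ is fully faithful (so $(\yoneda_Y^\Phi)^*\circ(\yoneda_Y^\Phi)_*=b$), yields
\[
h_*=(\yoneda_Y^\Phi)^*\circ(\Phi h)_*\circ(\yoneda_X^\Phi)_*.
\]
Here the left factor lies in $\Phi$ by (S2), the middle one by hypothesis (iii), and the right one by the $\Phi$-density of $\yoneda_X^\Phi$ established in the first paragraph; closure under composition (S1) then gives $h_*\in\Phi$, i.e.\ $h$ is $\Phi$-dense. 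The formal content of each implication is short; I expect the main obstacle to be marshalling the background ingredients correctly — verifying the $\Phi$-density of the Yoneda embedding from (S3) and the Yoneda Lemma, deriving the displayed bimodule identity, and confirming that the canonical adjunction $Ph\dashv P^*h$ genuinely restricts along the full inclusions $\Phi X\hookrightarrow PX$ — after which the three implications close up the cycle.
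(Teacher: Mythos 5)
Your proof is correct, but note that the paper itself does not prove this lemma: it quotes it from \cite{CH09}, adding only the one-sentence remark that $\Phi h$ has a right adjoint if and only if $P^*h=(\;)\circ h_*$ (co)restricts to $\Phi_* h:\Phi Y\to\Phi X$, which is the case precisely when $h_*\in\Phi$. That remark contains your implication (i)$\Rightarrow$(ii) (corestriction exists by (S1), and the adjunction restricts along the full inclusions, as you verify), together with a direct converse: if $\Phi_* h$ exists then $y^*\circ h_*=\Phi_* h(y^*)\in\Phi$ for every $y\in Y$ by (S2), whence $h_*\in\Phi$ by (S3). However, turning that converse into (ii)$\Rightarrow$(i) requires knowing that \emph{any} right adjoint of $\Phi h$ is the corestriction of $P^*h$ --- a point the remark leaves implicit. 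Your cycle sidesteps this cleanly: (ii)$\Rightarrow$(iii) uses the mate fact that $g\dashv r$ forces $g_*=r^*$, which is valid on the nose here because the local order of $\TVMod$ inherited from $\VRel$ is antisymmetric (so right adjoints of bimodules are unique) and one works in $(\mT,\V)$-$\Cat_0$; then (S2) gives $(\Phi h)_*\in\Phi$. Your (iii)$\Rightarrow$(i) via naturality of $\yoneda^\Phi$, full fidelity of $\yoneda_Y^\Phi$, the identity law $b\circ h_*=h_*$, and the $\Phi$-density of $\yoneda_X^\Phi$ is also sound; in particular your derivation of $(\yoneda_X^\Phi)_*\in\Phi$ from (S3) rests on the computation $\varphi^*\circ(\yoneda_X^\Phi)_*=\varphi(-)$, which is exactly the evaluation $\widehat{a}(T\yoneda_X(\fx),\varphi)=\varphi(\fx)$ that the paper itself uses inside its simplicity proof. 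In sum: a correct and self-contained argument, genuinely more detailed than the paper's citation-plus-remark, whose only extra ingredient beyond the paper's toolkit is the adjunction--bimodule correspondence $g\dashv r\iff g_*=r^*$, and which yields the equivalence with (iii) as part of the cycle rather than as an afterthought.
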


We note that $\Phi h$ has a right adjoint if and only if the right adjoint $P^*h$ of $Ph$ can be (co)restricted to
$\Phi_* h:\Phi Y\to\Phi X$, which is the case precisely when $h_*\in\Phi$.
\begin{uprop}
For a $\TV$-functor $h:(X,a)\to(Y,b)$, the following conditions are equivalent:
\begin{enumerate}[label=(\em\roman*)]
\item $h$ belongs to $\cL^\Phi$;
\item $h$ is fully faithful and $\Phi$-dense.
\end{enumerate}
\end{uprop}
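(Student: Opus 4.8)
The plan is to unwind the definition of $\cL^\Phi$ and split the defining \lari{} condition into two independent pieces. As recorded at the beginning of this section, $\cL^\Phi$ is exactly the class of $\Phi$-embeddings, so $h\in\cL^\Phi$ means precisely that $\Phi h$ is a \lari: it has a right adjoint $\Phi_*h$ satisfying $\Phi_*h\cdot\Phi h=1_{\Phi X}$. I would match the existence of the right adjoint with $\Phi$-density of $h$, and the right-inverse equation $\Phi_*h\cdot\Phi h=1$ with full faithfulness of $h$; assembling these two equivalences then yields the statement.

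The first match is immediate from the remark just preceding the statement: $\Phi h$ admits a right adjoint if and only if $h_*\in\Phi$, that is, if and only if $h$ is $\Phi$-dense, and in that case $\Phi_*h$ is the corestriction of the right adjoint $P^*h=(\;)\circ h_*$ of $Ph$ to a functor $\Phi Y\to\Phi X$ (well defined by (S1), since $h_*\in\Phi$).

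For the second match I would use that the inclusion $\Phi X\hookrightarrow PX$ is fully faithful and that $\Phi h$, $\Phi_*h$ are the (co)restrictions of $Ph$, $P^*h$; consequently $\Phi_*h\cdot\Phi h$ is the restriction of $P^*h\cdot Ph$ to $\Phi X$. If $h$ is fully faithful, then $P^*h\cdot Ph=1_{PX}$ by Proposition~\ref{prop:fff}(1), and restricting gives $\Phi_*h\cdot\Phi h=1_{\Phi X}$. For the converse, assuming $\Phi_*h\cdot\Phi h=1_{\Phi X}$, I would evaluate at the representables $x^*$, which lie in $\Phi X$ by (S2); since $\Phi_*h\cdot\Phi h(x^*)=P^*h\cdot Ph(x^*)$, this forces $P^*h\cdot Ph(x^*)=x^*$ for every $x\in X$, whereupon the computation $x^*=x^*\circ h^*\circ h_*=h^\circ\cdot x^\circ\cdot b\cdot Th=b(Th(-),h(x))$ from the proof of Proposition~\ref{prop:fff}(1) shows $h$ to be fully faithful.

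Combining the two matches, once $\Phi$-density secures the right adjoint the \lari{} condition collapses to $\Phi_*h\cdot\Phi h=1_{\Phi X}$, equivalently full faithfulness; hence $h\in\cL^\Phi$ if and only if $h$ is both fully faithful and $\Phi$-dense. Most of the real content is already carried by Proposition~\ref{prop:fff} and the preceding remark, so the only delicate point is the bookkeeping in the second match --- checking that the right-inverse equation for $\Phi h$ is literally the restriction to $\Phi X$ of that for $Ph$ --- which I expect to be the main (if modest) obstacle, resolved by the fullness of $\Phi X\hookrightarrow PX$.
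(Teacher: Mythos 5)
Your proof is correct, and it reaches the paper's conclusion by a route that differs in one direction. For (ii)$\Rightarrow$(i) your argument and the paper's coincide: $\Phi$-density supplies the right adjoint $\Phi_*h$ via the remark preceding the statement, and full faithfulness yields $\Phi_*h\cdot\Phi h=1_{\Phi X}$ by restricting the computation of Proposition~\ref{prop:fff}(1) to $\Phi X$. The divergence is in (i)$\Rightarrow$(ii): the paper obtains full faithfulness of a $\Phi$-embedding from Theorem~\ref{th:submon}, that is, from the general submonad theorem of~\ref{sub:2.6} (every $\mS'$-embedding is an $\mS$-embedding, so $h$ is a $P$-embedding and Proposition~\ref{prop:fff}(1) applies), whereas you extract it directly from the right-inverse equation $\Phi_*h\cdot\Phi h=1_{\Phi X}$ by evaluating at the representables $x^*$, which lie in $\Phi X$ by (S2) --- exactly the computation from the converse half of Proposition~\ref{prop:fff}(1). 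Your version is more elementary and self-contained, and it establishes slightly finer information: the \lari{} condition splits into two independent biconditionals, existence of the right adjoint $\iff$ $\Phi$-density, and, given that, the right-inverse equation $\iff$ full faithfulness; the paper's appeal to the submonad machinery bundles these together but is shorter and showcases the general theory. One small bookkeeping remark: the fullness of the inclusion $\Phi X\hookrightarrow PX$ is not what makes $\Phi_*h\cdot\Phi h$ the restriction of $P^*h\cdot Ph$ --- that identity holds simply because $\Phi h$ and $\Phi_*h$ are given by the same formulas $(\;)\circ h^*$ and $(\;)\circ h_*$ as $Ph$ and $P^*h$; where fullness genuinely enters is in guaranteeing that the adjunction inequalities for $Ph\dashv P^*h$ restrict to give $\Phi h\dashv\Phi_*h$ in the $\Ord$-enriched sense, since the order on $\Phi X$ is induced from $PX$. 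Your argument survives this correction unchanged.
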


\begin{proof}
(i)$\Rightarrow$(ii): From Theorem \ref{th:submon} we know that a $\Phi$-embedding $h$ is fully faithful, and, by
definition, $\Phi h$ is a left adjoint. (ii)$\Rightarrow$(i): If $h$ is $\Phi$-dense, then $\Phi h$ has a right adjoint
$\Phi^* h$, and so it remains to show that, when $h^*\circ h_*=a$, $\Phi^* h\cdot\Phi h=1_{PX}$: since $x^*\in\Phi$ for
every $x\in X$, the proof follows the arguments used in Proposition \ref{prop:fff}(1).
\end{proof}

\begin{ucor}
For every $\TV$-category $(X,a)$, $\yoneda_X^\Phi$ is a $\Phi$-embedding.
\end{ucor}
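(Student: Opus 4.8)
The plan is to derive the corollary directly from the immediately preceding Proposition, recalling that $\cL^\Phi$ is exactly the class of $\Phi$-embeddings. That Proposition characterises membership of a $\TV$-functor $h$ in $\cL^\Phi$ as the conjunction ``$h$ is fully faithful and $\Phi$-dense''. Hence it suffices to verify these two properties for $h=\yoneda^\Phi_X$.

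For full faithfulness I would factor the Yoneda $\TV$-functor as $\yoneda_X=\iota\cdot\yoneda^\Phi_X$, where $\iota\colon\Phi X\hookrightarrow PX$ is the inclusion. The Yoneda Lemma gives that $\yoneda_X$ is fully faithful, and $\iota$ is fully faithful since $\Phi X$ carries the structure $\widehat a$ inherited from $PX$. Because the structure on $\Phi X$ is precisely the restriction of $\widehat a$, full faithfulness of $\yoneda^\Phi_X$, namely $a(\fx,x)=\widehat a(T\yoneda_X(\fx),\yoneda_X(x))$, coincides with that of $\yoneda_X$ and so transfers at once (equivalently, one invokes the cancellation ``$g\cdot h$ and $g$ fully faithful $\Rightarrow$ $h$ fully faithful'').

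For $\Phi$-density I must show $(\yoneda^\Phi_X)_*\in\Phi$, and this is where the saturation axioms intervene. By (S3) it is enough to prove that $\varphi^*\circ(\yoneda^\Phi_X)_*\in\Phi$ for every point $\varphi\in\Phi X$. Using $\psi\circ f_*=\psi\cdot Tf$ with $f=\yoneda^\Phi_X$ together with the Yoneda Lemma, the composite collapses to
\[\bigl(\varphi^*\circ(\yoneda^\Phi_X)_*\bigr)(\fx)=\widehat a\bigl(T\yoneda_X(\fx),\varphi\bigr)=\varphi(\fx),\]
so that $\varphi^*\circ(\yoneda^\Phi_X)_*=\varphi$, which belongs to $\Phi$ by the very definition $\Phi X=\{\varphi\mid\varphi\in\Phi\}$. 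Thus (S3) yields $(\yoneda^\Phi_X)_*\in\Phi$. The one genuinely substantive point is this Yoneda collapse --- the same identity already used to corestrict $\yonmult_X$ to $\yonmult^\Phi_X$ --- which is exactly what allows (S3) to infer membership in $\Phi$ from the values at the points $\varphi$; everything else is formal once the Proposition is available.
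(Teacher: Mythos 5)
Your proof is correct and takes essentially the same route as the paper, which states the corollary as an immediate consequence of the preceding proposition: full faithfulness of $\yoneda^\Phi_X$ comes from the Yoneda Lemma, and $\Phi$-density from (S3) together with the collapse $\varphi^*\circ(\yoneda^\Phi_X)_*=\varphi$ --- exactly the computation the paper already performed in Section 4.3 when corestricting $\yonmult_X$ to $\yonmult^\Phi_X$. You correctly identified that this Yoneda identity is the only substantive step.
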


The class $\cR^\Phi$ is the
class of morphisms with the right lifting property with respect to morphisms in $\cL^\Phi$; that is, a morphism belongs to $\cR^\Phi$ if, and only if, it is \emph{injective with respect to the class} $\cL^\Phi$. Since $(\mL^\Phi,\mR^\Phi)$ is a \textsc{lofs},
these morphisms have the \textsc{kz}-lifting property with respect to morphisms in $\cL^\Phi$.
Such morphisms encompass interesting
properties, but are usually very difficult to identify.

\subsection{Examples: the lari--opfibration {\normalfont\textsc{lofs}}}
When \[\Phi=\{f^*\,|\,f\mbox{ is a $\TV$-functor}\},\]
then $\yoneda^\Phi_X:X\to\Phi X$, $x\mapsto x^*$, is an isomorphism; that is, the monad $(\Phi,\yoneda^\Phi,\yonmult^\Phi)$ is isomorphic to the identity monad. Therefore the corresponding \textsc{lofs} $(\mL,\mR)$ is the one studied in \cite[Examples 4.7, 4.8]{CLF17}, and the monad $\mR$ is the free (split) opfibration monad on $\TV$-$\Cat_0$. Then $\cL^\Phi$ is the class of laris and $\cR^\Phi$ the class of split opfibrations.

\subsection{Examples: the presheaf {\normalfont\textsc{lofs}}}
Let us now take the largest saturated class
\[\Phi=PX=\{\varphi:(X,a)\modto E\,|\,\varphi \mbox{ bimodule}\}.\]
From Theorem \ref{sub:2.5} we know that the presheaf monad defines a \textsc{lofs} $(\mL,\mR)$ in $(\mT,\V)$-$\Cat_0$,
and, consequently, a \textsc{wfs} $(\cL,\cR)$, where $\cL$ is the class of fully faithful $(\mT,\V)$-functors. It is easy
to check that they coincide with extremal monomorphisms in $(\mT,\V)$-$\Cat_0$, that is, \emph{topological embeddings}.
Therefore, from Theorem \ref{sub:2.5} we conclude that, for every quantale $\V$ and monad $\mT$ in the conditions of
\ref{sub:3.1}, $(\mT,\V)$-$\Cat_0$ has a \textsc{wfs} $(\cL,\cR)$ where $\cL$ is the class of embeddings.
\medskip

When $\mT$ is the identity monad and $\V=\two$, that is, in the category $\Ord$ of (anti-symmetric) ordered sets and monotone maps, the morphisms in $\cR$ were characterised by Ad\'{a}mek (as mentioned in \cite{Th00}), as those monotone maps which are \emph{fibre-complete}, \emph{fibrations} and \emph{co-fibrations} (among some other characterisations; see also
\cite{AHRT}).
\medskip

When $\mT$ is the identity monad and $\V=[0,\infty]_+$, that is, in the category $\Met_0$ of separated generalised metric spaces and non-expansive maps,
a characterisation of the morphisms in $\cR$ is not known. It follows from \cite{hyp} that a (non-expansive) map $f:X\to 1$ belongs to $\cR$ if, and only if, $X$ is an \emph{hyperconvex metric space} (see also \cite{Is}).
\medskip

When $\mT=\mU$ and $\V=\two$, that is, in the category $\Top_0$ of \textsc{T0}-spaces and continuous maps, morphisms in $\cR$ were studied in a series of papers by Cagliari and Mantovani (see \cite{CM} and references there), and characterised in \cite{CCM, CCM2}; in \cite{CCM} they are identified via a way-below relation while \cite{CCM2} gives characterisations that extend those of $\Ord$ mentioned above.

\subsection{Examples: the Lawvere {\normalfont\textsc{lofs}}}

The choice of \[\Phi=\{\psi\in PX\,|\,\psi\mbox{ is right adjoint}\}\] has particular relevance.
\medskip

When $\mT=\mId$ and $\V=[0,\infty]_+$, the injective objects in $\Met_0$ with respect to
$\Phi$-embeddings are the Cauchy-complete metric spaces, that is, a non-expansive map $X\to 1$ belongs to $\cR^\Phi$ if
and only if $X$ is Cauchy-complete (see \cite{Law73, CH09, CH08}). Therefore, the morphisms in $\cR^\Phi$ are good candidates for a fibrewise notion of
Cauchy-completeness. This \textsc{lofs} was studied in \cite{CLF17}. The morphisms in $\cL^\Phi$ are the embeddings (=isometries) $f:(X,a)\to(Y,b)$ such that, for every $y\in Y$, $b(f(-),y)=\lim_na(-,x_n)$ for some Cauchy sequence $(x_n)$ in $X$.
We point out that the non-expansive maps in $\cR^\Phi$ do not coincide with Sozubek's L-complete maps \cite{So}. Indeed,
Sozubek defines them via an injective property, but his left part -- the so called $L$-equivalences -- is a proper
subclass of $\cL^\Phi$.
\medskip

When $\mT=\mU$ and $\V=\two$, this choice of $\Phi$ gives also an interesting \textsc{lofs}. As shown in \cite{CH09}, the $\Phi$-algebras for this monad in $\Top_0$ are the sober spaces. Since sober spaces are also the algebras for the lax idempotent and simple monad of completely prime filters on $\Top_0$, the \textsc{wfs} induced by $\Phi$ coincides with the \textsc{wfs} studied in \cite{CCM}; that is, $\cL^\Phi$ is the class of \emph{completely flat embeddings} and $\cR^\Phi$ is the class of \emph{fibrewise sober} continuous maps (see \cite{Es98, EF99, RV}).

\subsection{Further examples}
Using the techniques of \cite{Es98, EF99} and \cite[3.7]{CH08}, one can define saturated classes of $\TV$-bimodules
$\Phi_0$ and $\Phi_\omega$ so that the left parts of the corresponding \textsc{wfs} are
$\cL^{\Phi_0}=\{$dense embeddings$\}$ and $\cL^{\Phi_\omega}=\{$flat embeddings$\}$. The simple presheaf submonads they define induce \textsc{lofs} whose underlying \textsc{wfs} were
studied in \cite{CCM}, where $\cR^{\Phi_0}$ and $\cR^{\Phi_\omega}$ give the fibrewise notions of
\emph{Scott domains} and \emph{stably compact spaces}.

{


\begin{thebibliography}{99}
\bibitem{AHRT} J. Ad\'{a}mek, H. Herrlich, J. Rosick\'y, W. Tholen, Weak factorization systems and topological functors.
    \emph{Appl. Categ. Structures} 10 (2002) 237--249.
    \bibitem{hyp} N. Aronszajn, P. Panitchpakdi, Extension of uniformly continuous transformations and
hyperconvex metric spaces. \emph{Pacific J. Math.} 6 (1956) 405--439.
\bibitem{Ba} M. Barr,  Relational algebras. In: \emph{Lecture Notes in Math.} 137, Springer, Berlin 1970,
pp. 39--55.
\bibitem{BG} J. Bourke, R. Garner, Algebraic weak factorisation systems: I. \emph{J. Pure Appl. Algebra} 220 (2016)
    108--147.
\bibitem{CCM} F. Cagliari, M.M. Clementino, S. Mantovani, Fibrewise injectivity and Kock-Z\"oberlein monads. \emph{J.
    Pure Appl. Algebra} 216 (2012) 2411--2424.
\bibitem{CCM2} F. Cagliari, M.M. Clementino, S. Mantovani, Fibrewise injectivity in order and topology. \emph{Topology
    Appl.} 200 (2016) 61--78.
    \bibitem{CM} F. Cagliari, S. Mantovani, Injectivity and sections. \emph{J. Pure Appl. Algebra} \textbf{204} (2006) 79--89.
\bibitem{CHK} C. Cassidy, M. H\'{e}bert, G.M. Kelly, Reflective subcategories, localizations and factorization systems.
    \emph{J. Aust. Math. Soc. Ser. A} 38 (1985) 287--329.
\bibitem{CCH} D. Chikhladze, M.M. Clementino, D. Hofmann, Representable $\TV$-categories. \emph{Appl. Categ. Structures}
    23 (2015) 829--858.
\bibitem{CH03} M.M. Clementino, D. Hofmann, Topological features of lax algebras. \emph{Appl. Categ. Structures} 11
    (2003) 267--286.
\bibitem{CH08} M.M. Clementino, D. Hofmann, Relative injectivity as cocompleteness for a class of distributors.
    \emph{Theory Appl. Categ.} 21 (2008) 210--230.
\bibitem{CH09} M.M. Clementino, D. Hofmann, Lawvere completeness in topology. \emph{Appl. Categ. Structures} 17 (2009)
    175--210.

\bibitem{CHJ14} M.M. Clementino, D. Hofmann, G. Janelidze, The monads of classical algebra are seldom weakly cartesian.
    \emph{J. Homotopy Relat. Struct.} 9 (2014) 175--197.


\bibitem{CLF16} M.M. Clementino, I. L\'{o}pez-Franco, Lax orthogonal factorisation systems. \emph{Adv. Math.} 302 (2016)
    458--528.
\bibitem{CLF17} M.M. Clementino, I. L\'{o}pez-Franco, Lax orthogonal factorisations in ordered structures, arXiv: 1702.02602, 2017.

\bibitem{CT03} M.M. Clementino, W. Tholen, Metric, Topology and Multicategory -- A Common Approach. \emph{J. Pure Appl.
    Algebra} 179 (2003) 13--47.

\bibitem{Es98} M. Escard\'{o}, Properly injective spaces and function spaces. \emph{Topology Appl.} 89 (1998) 75--120.

\bibitem{EF99} M. Escard\'{o}, R. Flagg, Semantic domains, injective spaces and monads. \emph{Electr. Notes in Theor. Comp.
    Science} 20, electronic paper 15 (1999).
\bibitem{Ga} R. Garner, Understanding the small object argument. \emph{Appl. Categ. Structures} 17 (2009) 247--285.

\bibitem{GT} M. Grandis, W. Tholen, Natural weak factorization systems. \emph{Arch. Math.} (Brno) 42 (2006) 397--408.
\bibitem{Gr} J.W. Gray, Fibred and cofibred categories. In: 1966 Proc. Conf. Categorical Algebra (La Jolla, Calif., 1965) pp. 21--83 Springer, New York.
\bibitem{H07} D. Hofmann, Topological theories and closed objects. \emph{Adv. Math.} 215 (2007), 789--824.
\bibitem{H11} D. Hofmann, Injective spaces via adjunction. \emph{J. Pure Appl. Algebra} 215 (2011), 283--302.

\bibitem{HST} D. Hofmann, G. Seal, W. Tholen (eds.), \emph{Monoidal Topology. A categorical approach to order, metric,
    and topology}. Encyclopedia of Mathematics and its Applications, 153. Cambridge University Press, Cambridge, 2014.
\bibitem{HT} D. Hofmann, W. Tholen, Kleisli compositions for topological spaces. \emph{Topology Appl.} 153 (2006)
    2952--2961.
\bibitem{Is} J. Isbell, Six theorems about injective metric spaces. \emph{Comment. Math. Helv.} 39 (1964) 65--76.
\bibitem{Ko} A. Kock, Monads for which structures are adjoint to units. \emph{J. Pure Appl. Algebra} 104 (1995) 41--59.

\bibitem{Law73} F.W. Lawvere, Metric spaces, generalized logic, and closed
categories. {\em Rend. Sem. Mat. Fis. Milano} {\bf 43} (1973)
135--166.
\bibitem{LF} I. L\'{o}pez Franco, Cofibrantly generated lax orthogonal factorisation systems. arXiv 1510.07131, 2015.

\bibitem{Lo} R. Lowen, {\em Approach Spaces: The missing
    link in the Topology-Uniformity-Metric Triad}. Oxford
    Mathematical Monographs (Oxford University Press, Oxford 1997).

\bibitem{Ma02} E.G. Manes, Taut monads and T0-spaces. \emph{Theoret. Comput. Sci.} 275 (2002), 79--109.

\bibitem{Qui} D.G. Quillen, \emph{Homotopical Algebra}. Lecture Notes in Mathematics, vol. 43. Springer, 1967.
\bibitem{RV} G. Richter, A. Vauth, Fibrewise sobriety. In: \emph{Categorical structures and their applications}, World
    Sci. Publ. (2004), pp. 265--283.

\bibitem{So} S. Sozubek, Lawvere completeness as a topological property. \emph{Theory Appl. Categ.} 27 (2013) 242--262.

\bibitem{Th00} W. Tholen, Injectives, exponentials, and model categories. In \emph{Abstracts of
the International Summer Conference in Category Theory}, Como, Italy, 2000, pp. 183--190.
\end{thebibliography}
\end{document}